\numberwithin{equation}{section}
\numberwithin{figure}{section}
\theoremstyle{plain}
\newtheorem{thm}{\protect\theoremname}[section]
\newtheorem{cor}[thm]{\protect\corollaryname}
\newtheorem{lem}[thm]{Lemma}
\newtheorem{prop}[thm]{Proposition}
\theoremstyle{definition}
\newtheorem{defn}[thm]{\protect\definitionname}
\newtheorem{qn}{Question}
\newtheorem{rem}[thm]{\protect\remarkname}
\author{Sungkyung Kang}
\address{Center for Geometry and Physics, Institute for Basic Science (IBS), Pohang 37673, Korea}
\email{sungkyung38@icloud.com}
\providecommand{\corollaryname}{Corollary}
\providecommand{\definitionname}{Definition}
\providecommand{\remarkname}{Remark}
\providecommand{\theoremname}{Theorem}
\begin{document}

\title{One stabilization is not enough for contractible 4-manifolds}

\begin{abstract}
We construct an example of a cork that remains exotic after taking a connected sum with $S^2 \times S^2$. Combined with a work of Akbulut-Ruberman, this implies the existence of an exotic pair of contractible 4-manifolds which remains absolutely exotic after taking a connected sum with $S^2 \times S^2$.
\end{abstract}
%\thanks{The author was supported by the Institute for Basic Science (IBS-R003-D1).}
\maketitle

\section{Introduction}

It is a widely known fact that any pair of homeomorphic simply-connected 4-manifolds are stably diffeomorphic, i.e. diffeomorphic after taking connected sums with finitely many copies of $S^2 \times S^2$, as shown by Wall in \cite{wall}. It is thus a natural question to ask, given such a pair, how many copies of $S^2 \times S^2$ are necessary to make them diffeomorphic? There are lots of families of homeomorphic simply connected 4-manifolds, including simply-connected elliptic surfaces \cite{mandelbaum1979decomposing}, which become diffeomorphic after one stabilization. However, there has been no known example where one stabilization is not enough to obtain a diffeomorphism.

One can try to tackle this problem by constructing a cork which does not trivialize after one stabilization. A triple $(Y,W,f)$ is said to be a \emph{cork} if
\begin{itemize}
    \item $Y$ is a homology 3-sphere,
    \item  $W$ is a contractible 4-manifold with an identification $\partial W = Y$,
    \item $f:Y\rightarrow Y$ is a self-diffeomorphism of $Y$ which does not extend smoothly to $W$; note that $f$ always extends to a self-homeomorphism $W$ by Freedman's theorem \cite{freedman1982topology}.
\end{itemize}

Given a cork $(Y,W,f)$, one can perform a cork twist using the given cork to construct potentially exotic smooth structures on 4-manifolds. In some sense, it is the only way to product exotic 4-manifolds; it is known that, given any finite list of pairwise homeomorphic smooth closed simply-connected 4-manifolds, there exists a single cork from which all manifolds in the given list can be generated by a cork twist \cite{highercorks}. As in the closed case, the phenomenon that all exoticness are killed by sufficiently many stabilizations also occurs in the case of corks, or more generally, 4-manifolds with boundary, as shown in \cite{gompf1984stable}. In particular, given any cork $(Y,W,f)$, there always exists an integer $n>0$ such that $f$ extends smoothly to a self-diffeomorphism of $Y\sharp n(S^2 \times S^2)$.\footnote{Although this statement is not explicitly written in the main theorem of \cite{gompf1984stable}, one can see that this follows easily from Gompf's arguments.} Thus we are naturally led to ask whether ``one is enough'' for corks, i.e. there exists a cork which does not trivialize after one stabilization. To be more precise, we can ask whether there exists a cork $(Y,W,f)$ where $f$ does not smoothly extend to $W\sharp (S^2 \times S^2)$. In this paper, we answer this question in the affirmative, using techniques from involutive Heegaard Floer homology.

\begin{thm}\label{thm:main}
There exists a cork $(Y,W,f)$ such that $f$ does not extend to a self-diffeomorphism of $W\sharp (S^2 \times S^2)$.
\end{thm}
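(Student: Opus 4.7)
The plan is to exhibit an explicit cork $(Y, W, \tau)$, where $\tau$ is an involution of $Y$, and to construct an involutive-Heegaard-Floer-theoretic invariant that distinguishes the two smooth gluings of $W \sharp (S^2 \times S^2)$ along $Y$.

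For the choice of example, I would look to small Seifert fibered integer homology spheres whose involutive Heegaard Floer homology has been computed (by Hendricks-Manolescu, Dai-Manolescu, and Dai-Hom-Stoffregen-Truong), and which bound an explicit contractible 4-manifold admitting a natural boundary symmetry. Brieskorn spheres like $\Sigma(2,3,13)$, or a small Mazur/Akbulut-Yasui style cork with a visible involution on the boundary obtained from a symmetric surgery diagram, are natural candidates. A preliminary sanity check is that $(Y, W, \tau)$ is a bona fide cork; this can be done via the usual (non-involutive) local equivalence class, checking that $\tau_*$ on $\CFm(Y)$ disagrees with the map forced by any hypothetical smooth extension across $W$.

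To obstruct extension over $W \sharp (S^2 \times S^2)$, I would work in an equivariant refinement of Hendricks-Manolescu's local equivalence category, tracking the commuting pair consisting of the involutive action $\iota$ on $\CFm(Y)$ and the action $\tau_*$ induced by the cork involution. If $\tau$ extended smoothly to $W\sharp (S^2 \times S^2)$, then the relative equivariant invariant of $(W,\tau)$ would coincide with that of $(W, \id)$ after one $S^2\times S^2$ stabilization. The key structural feature to exploit is that the $\iota$-complex of $S^2 \times S^2$ is trivial in the local equivalence sense, so that connected sum with $S^2 \times S^2$ acts as the identity on the target of the relative invariant; hence any obstruction that is already nontrivial at the level of the equivariant local equivalence group is automatically preserved under a single stabilization.

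The main obstacle is to actually \emph{compute} the equivariant local equivalence class of $(Y, \tau)$ for the chosen example and prove it is nontrivial. This requires an explicit chain model of $\CFm(Y)$ carrying commuting chain-level actions of $\iota$ and $\tau_*$, obtained either from a doubly-symmetric Heegaard diagram or, more practically, from large-surgery / knot-Floer input applied to a knot in $S^3$ which is preserved by an involution and whose surgery produces $Y$. Once such a model is in hand, the argument reduces to a finite algebraic check that no equivariant local map can conjugate the pair $(\iota, \tau_*)$ to the standard pair on the trivial $\iota$-complex. I expect this last step to be the technical heart of the argument: one must choose an invariant that is simultaneously computable for the chosen $(Y, W, \tau)$, sensitive enough to detect the cork twist, and formally trivial on $S^2 \times S^2$.
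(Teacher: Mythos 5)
Your proposal founders on its central structural claim: that ``connected sum with $S^2\times S^2$ acts as the identity on the target of the relative invariant,'' so that any obstruction nontrivial in an equivariant local equivalence group automatically survives stabilization. This is false, and if it were true it would prove far too much: no number of stabilizations would ever trivialize a cork, contradicting the stable triviality results of Wall and Gompf. The correct statement (Hendricks--Hom--Stoffregen--Zemke) is that the involutive cobordism map induced by $S^2\times S^2$ is multiplication by the mapping-cone variable $Q$ in $CFI^-=\mathrm{Cone}(1+\iota)$, so $F^I_{W\sharp(S^2\times S^2)}\simeq Q\,F^-_W$. Consequently, a diffeomorphism rel boundary of the stabilized fillings only forces $Q(F^-_{W_1}+F^-_{W_2})\sim 0$, which unpacks to the condition $c_{W_1}+c_{W_2}\in\mathrm{Im}(1+\iota_Y)$ --- a strictly weaker conclusion than $c_{W_1}\neq c_{W_2}$, and one that is automatically satisfied after a second stabilization since $Q^2=0$. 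So the obstruction is not ``formally preserved'' by stabilization; one must verify, for a concrete example, that the difference of the two filling invariants escapes the image of $1+\iota_Y$. That verification is the actual content of the theorem, and nothing in your outline addresses it.

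Relatedly, your hope of finding a small Seifert fibered or Mazur-type example with already-computed involutive Floer homology is unlikely to succeed: one needs simultaneous control of the $\iota_Y$-action and of the two cobordism maps $F^-_{W_1},F^-_{W_2}$, not just the local equivalence class of $(CF^-(Y),\iota)$. The paper achieves this by building the cork from $(+1)$-surgery on a pair of deform-spun slice disks for $K\sharp -K$ with $K=K_0\sharp K_0$, $K_0=(2T_{4,5}\sharp -T_{4,9})_{3,-1}$, transporting the obstruction through the large surgery formula to the statement $t_{D_1}+t_{D_2}\notin\mathrm{Im}(1+\iota_K)$ in $\widehat{HFK}$, and then performing a partial computation of $\iota_K$ for this cable using bordered Floer homology and the connected sum formula for involutions. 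Your proposal would need to be rebuilt around the weaker, $Q$-twisted obstruction and supplied with an example for which that obstruction is actually computable.
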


Then, by following the arguments of \cite{akbulut2016absolutely}, one can also prove the existence of an absolutely exotic pair of contractible 4-manifolds which remains absolutely exotic after one stabilization.

\begin{cor}\label{cor:main}
There exist homeomorphic smooth contractible 4-manifolds $W_1,W_2$, with diffeomorphic boundaries, such that $W_1 \sharp (S^2 \times S^2)$ and $W_2 \sharp (S^2 \times S^2)$ are not diffeomorphic.
\end{cor}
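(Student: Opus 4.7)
The plan is to adapt the construction of Akbulut--Ruberman \cite{akbulut2016absolutely}, which converts a relatively exotic cork pair into an absolutely exotic one, to the cork $(Y,W,f)$ produced by \autoref{thm:main}. The main ingredient from their paper is an invertible homology cobordism $V$ from $Y$ to a suitably rigid integer homology 3-sphere $Y'$, chosen so that every self-diffeomorphism of $Y'$ extends across $V$ in essentially a unique way up to isotopy rel boundary.

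Using $V$, I would set $W_1 := W \cup_Y V$ (gluing along the identity identification of $\partial W$ with $Y$) and define $W_2 := W \cup_Y V$ using the identification modified by the cork twist $f$. Since $V$ is a homology cobordism and $W$ is contractible, Mayer--Vietoris and Seifert--van Kampen show that $W_1$ and $W_2$ are smooth contractible 4-manifolds, and both of their boundaries are equal to $Y'$. Freedman's theorem \cite{freedman1982topology} then produces a homeomorphism between $W_1$ and $W_2$.

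To conclude, I need to rule out a diffeomorphism $\Phi \colon W_1 \sharp (S^2\times S^2) \to W_2 \sharp (S^2\times S^2)$. After an ambient isotopy one may arrange that the stabilizing $S^2\times S^2$ summand lies in the interior of the $W$ region of each side and that the embedded copy of $V$ is preserved, so that $\Phi$ restricts to a self-diffeomorphism $g$ of the common boundary $Y'$. Rigidity of $V$ provides a canonical extension of $g$ across $V$; composing $\Phi$ with the inverse of this extension yields a diffeomorphism $W \sharp (S^2\times S^2) \to W \sharp (S^2\times S^2)$ whose restriction to $Y$ is isotopic to $f$. Such a diffeomorphism is precisely a smooth extension of $f$ to $W\sharp(S^2\times S^2)$, contradicting \autoref{thm:main}.

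The main obstacle will be verifying that the Akbulut--Ruberman rigidity argument passes cleanly through the single stabilization. Specifically, one must check that any hypothetical diffeomorphism of the stabilized manifolds can be isotoped so that the $S^2\times S^2$ summand lies entirely inside the $W$ piece and so that the cobordism $V$ is respected, and that the extension of the boundary diffeomorphism across $V$ remains rigid enough to force the resulting map on $Y$ into the isotopy class of $f$ (rather than some other element of the mapping class group). Because the stabilization is performed deep in the interior of $W$, far from $V$, these verifications should be essentially formal adaptations of the original Akbulut--Ruberman argument.
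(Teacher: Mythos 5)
Your overall strategy is the same as the paper's: both run the cork from \Cref{thm:main} through the Akbulut--Ruberman construction, gluing an invertible homology cobordism $X$ from $Y$ to a ``rigid'' homology sphere $N$ onto $W$ (once with the identity, once twisted by $f$) to get the contractible pair. The construction of $W_1,W_2$ and the verification that they are contractible and homeomorphic are fine.

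The gap is in your final step. You propose to isotope a hypothetical diffeomorphism $\Phi\colon W_1\sharp(S^2\times S^2)\to W_2\sharp(S^2\times S^2)$ so that it carries the embedded copy of the cobordism to itself and so that the $S^2\times S^2$ summand sits inside the $W$ piece. There is no justification for the first of these: a diffeomorphism of $4$-manifolds cannot in general be isotoped to preserve a prescribed codimension-zero submanifold, and this is precisely the kind of control one never has in dimension four. You flag this yourself as something to check, but it is not a ``formal adaptation''--it is the crux, and as stated it would fail. The Akbulut--Ruberman argument (and the paper's proof) avoids this entirely by working \emph{externally}: restrict $\Phi$ to the boundary to get $g\colon N\to N$, use the defining property of $X$ to extend $g$ over the inverse cobordism $\bar X$ by a diffeomorphism that is the identity on the $Y$-end, and glue this extension onto $\Phi$ along $N$. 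Since $X\cup_N\bar X\cong Y\times I$, the result is a diffeomorphism $W\sharp(S^2\times S^2)\to W\sharp(S^2\times S^2)$ restricting to $f$ on $Y$ (no isotoping of $\Phi$ is ever needed, and the location of the connected-sum point is irrelevant because any two interior points of a connected manifold are ambiently isotopic). This contradicts \Cref{thm:main}. With the external gluing in place of your isotopy step, your proof becomes the paper's proof.
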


Given the above corollary, the next step would be to prove that performing a cork twist by the cork presented in the proof of \Cref{thm:main} produces a pair of closed smooth simply-connected 4-manifolds which are not diffeomorphic after one stabilization. We were unable to prove it, so we leave it as a question.

\begin{qn}
Can we perform a cork twist using the cork we construct in the proof of \Cref{thm:main} to find an exotic closed simply-connected 4-manifold which remains exotic after one stabilization by $S^2 \times S^2$?
\end{qn}

Furthermore, we can also ask the following question.

\begin{qn}\label{qn:qn2}
Is there a cork which does not trivialize after two stabilizations? How about $n$ stabilizations for general $n\ge 2$?
\end{qn}

Note that \Cref{qn:qn2} cannot be answered using the arguments in this paper, as the mapping cone variable $Q$ in involutive Heegaard Floer homology satisfies $Q^2 = 0$.

\subsection*{Organization} In \Cref{sec:prelim}, we present a brief review of Heegaard Floer theoretic facts that we will use in this paper. In \Cref{sec:ansatz}, we give a detailed construction of a cork which we will use to prove \Cref{thm:main}. In \Cref{sec:obstruction}, we describe an algebraic obstruction which we will use to prove \Cref{thm:main}. Finally, in \Cref{sec:proof}, we prove \Cref{thm:main} and \Cref{cor:main}.

\subsection*{Acknowledgement} The author would like to thank Jae Choon Cha, Gary Guth, Kyle Hayden, Jennifer Hom, Seungwon Kim, Robert Lipshitz, Patrick Naylor, JungHwan Park, Mark Powell, Hannah Schwartz, and Ian Zemke for helpful discussions, suggestions, and comments. The author would particularly like to thank Ian Zemke for suggesting the example $2T_{4,5}\sharp -T_{4,9}$ and also for numerous helpful conversations over email. Finally, the author would like to thank an anonymous referee for pointing out gaps in the previous version of the paper.
\section{Heegaard Floer preliminaries} \label{sec:prelim}

\subsection{Heegaard Floer theory, naturality, and functoriality}
Heegaard Floer homology, developed by Ozsv\`{a}th and Sz\`{a}bo in \cite{ozsvath2004holomorphic}, gives a set of invariants which can be used to study 3-manifolds, knots, links, and cobordisms between them. In particular, for each 3-manifold $Y$ together with a $\mathrm{Spin}^c$-structure $\mathfrak{s}$ on $Y$, they associate to it a chain complex $CF^-(Y,\mathfrak{s})$ of $\mathbb{F}_2[U]$-modules, whose homotopy equivalence class depends only on the diffeomorphism class of $(Y,\mathfrak{s})$. Furthermore, when $c_1(\mathfrak{s})$ is torsion, then $CF^-(Y,\mathfrak{s})$ is equipped with an absolute $\mathbb{Q}$-grading, and its graded homotopy equivalence class becomes an invariant of $(Y,\mathfrak{s})$. Furthermore, in this case we also have a localization formula
\[
U^{-1}CF^-(Y,\mathfrak{s})\simeq \mathbb{F}_2[U,U^{-1}].
\]
Note that, when $Y$ is a homology sphere, it carries a unique $\mathrm{Spin}^c$ structure, so we will drop $\mathfrak{s}$ from our notation in this case and just write $CF^-(Y)$ instead.

This theory also has a knot-theoretic counterpart, called knot Floer homology. To a knot $K$ in a $\mathrm{Spin}^c$ 3-manifold $(Y,\mathfrak{s})$, one can associate a chain complex $CFK_{UV}(Y,K,\mathfrak{s})$ of $\mathbb{F}_2[U,V]$-modules whose homotopy equivalence class depends only on the isotopy class of $K$. We will also use its truncations by $V=0$ and $U=V=0$, which we will denote as $CFK^-(Y,K,\mathfrak{s})$ and $\widehat{CFK}(Y,K,\mathfrak{s})$, respectively. When $K$ is rationally null-homologous and $c_1(\mathfrak{s})$ is torsion, $CFK_{UV}(Y,K,\mathfrak{s})$ is equipped with an absolute $\mathbb{Z}$-bigrading, called Alexander and Maslov gradings. In this paper, we will always deal with knots in $S^3$, so we will drop $\mathfrak{s}$ from our notation. Note that, while the construction was first given by Ozsvath and Szabo in \cite{ozsvath2008knot}, we are actually using the formalism established by Zemke in \cite{zemke2019link}.

Although diffeomorphic 3-manifolds (or isotopic knots for $CFK$) induce homotopy equivalent Heegaard Floer chain complexes, the definition of Heegard Floer homology does not take its topological input directly. In fact, given a 3-manifold $Y$, one first represents it as a pointed Heegaard diagram $\mathcal{H}=(\Sigma,\boldsymbol\alpha,\boldsymbol\beta,z)$, and then counts holomorphic disks in a symmetric power of $\Sigma$ with boundary conditions given by $\boldsymbol\alpha$ and $\boldsymbol\beta$ while recording its algebraic intersection numbers with $z$ to define $CF^-(\mathcal{H})$, whose homotopy equivalence class is denoted as 
\[
CF^-(\mathcal{H}) = CF^-(Y) = \bigoplus_{\mathfrak{s} \in \mathrm{Spin}^c(Y)}CF^-(Y,\mathfrak{s}).
\]

While any two Heegaard diagrams representing the same 3-manifold are connected by a sequence of Heegaard moves, and each Heegaard move $\mathcal{H}_1\rightarrow \mathcal{H}_2$ induces a homotopy equivalence $CF^-(\mathcal{H}_1)\rightarrow CF^-(\mathcal{H}_2)$, it is not a priori clear whether every loop of Heegaard moves starting from a diagram $\mathcal{H}$ should induce the identity map on $CF^-(\mathcal{H})$. This problem was resolved up to first order by Juhasz, Thurston, and Zemke \cite{juhasz2021naturality}, by proving that any loop of Heegaard moves indeed induce the identity map up to homotopy. In other words, for any two Heegaard diagrams $\mathcal{H}_1,\mathcal{H}_2$ representing $Y$, the transition map
\[
\Psi_{\mathcal{H}_1\rightarrow \mathcal{H}_2}:CF^-(\mathcal{H}_1)\rightarrow CF^-(\mathcal{H}_2)
\]
is well-defined up to homotopy, and satisfies 
\[
\Psi_{\mathcal{H}\rightarrow\mathcal{H}} \sim \mathrm{id},\quad \Psi_{\mathcal{H}_2\rightarrow \mathcal{H}_3}\circ \Psi_{\mathcal{H}_1\rightarrow \mathcal{H}_2} \sim \Psi_{\mathcal{H}_1\rightarrow \mathcal{H}_3}.
\]
Note that similar statements also hold for knot Floer chain complexes.

Having established the first-order naturality for Heegaard Floer homology, we are now allowed to talk about its (first-order) functoriality under cobordisms. Given two $\mathrm{Spin}^c$ 3-manifolds $Y_1,Y_2$, a 4-dimensional $\mathrm{Spin}^c$-cobordism $(W,\mathfrak{s})$ between them, and a smoothly embedded curve $\gamma \subset W$ connecting the (implicitly chosen) basepoints of $Y_1$ and $Y_2$, Ozsvath and Szabo define in \cite{ozsvath2006holomorphic} a chain map
\[
F^-_{W,\mathfrak{s},\gamma}:CF^-(Y_1,\mathfrak{s}\vert_{Y_1})\rightarrow CF^-(Y_2,\mathfrak{s}\vert_{Y_2}),
\]
whose homotopy class depends only on the smooth isotopy class of $(W,\mathfrak{s},\gamma)$. There is also a knot Floer theoretic counterpart of this cobordism map construction: given knots $K_1\subset Y_1$, $K_2 \subset Y_2$, a smoothly embedded oriented surface $S\subset W$ satisfying $\partial S = K_1 \cup K_2$, together with a suitable decoration (see \cite[Section 1]{zemke2019link} for a definition of decorated cobordisms), Zemke defines a chain map 
\[
F_{W,S,\mathfrak{s}}:CFK_{UV}(Y_1,K_1)\rightarrow CFK_{UV}(Y_2,K_2)
\]
whose homotopy class depends only on the smooth isotopy class of $(W,S,\mathfrak{s})$. Note that these cobordism maps are functorial under compositions of cobordisms (with extra data).

\begin{rem} \label{rem:invariantcW}
Functoriality of Heegaard Floer homology can be used to construct a diffeomorphism (rel boundary) invariant of contractible 4-manifolds as follows. Given a contractible 4-manifold $W$ bounding a homology sphere $Y$, we can consider $W$ as a cobordism from $S^3$ to $Y$ by removing a small open ball in its interior. Choose basepoints on $S^3$ and $Y$, one on each, and a smoothly embedded curve $\gamma \subset W$ which connects the basepoints. Then we have a cobordism map
\[
F^-_W = F^-_{W,\mathfrak{s},\gamma}:CF^-(S^3)\rightarrow CF^-(Y),
\]
which is well-defined up to homotopy. Since $CF^-(S^3)\simeq \mathbb{F}_2[U]$, the homology class 
\[
[F^-_W(1)]\in HF^-(Y) = H_\ast(CF^-(Y))
\]
is well-defined; we will denote this class as $c_W$. 
\end{rem}
\begin{rem} \label{rem:invarianttD}
One can also use cobordism maps to define an invariant of smooth slice disks. For example, if a knot $K$ bounds a smooth disk $D$ in $B^4$, then endowing $D$ with the simplest possible decoration induces a cobordism map
\[
F_{D}:\widehat{CFK}(S^3,\mathrm{unknot})\rightarrow \widehat{CFK}(S^3,K);
\]
note that we are using the hat-flavored version, $\widehat{CFK}$, instead of $CFK_{UV}$, since that's what we will use in this paper. Since $\widehat{CFK}(S^3,\mathrm{unknot})\simeq \mathbb{F}_2$, we denote the class $[F_{D}(1)] \in \widehat{HFK}(S^3,K)=H_\ast(\widehat{CFK}(S^3,K))$ by $t_D$, which was defined first in \cite{juhaszmarengon}.
\end{rem}

\subsection{Involutive Heegaard Floer theory}

Naturality can also be used to extract more data from Heegaard Floer homology. In particular, Hendricks and Manolescu defined involutive Heegaard Floer homology in \cite{hendricks2017involutive} as follows. Given an oriented 3-manifold $Y$, choose a pointed Heegaard diagram $\mathcal{H}=(\Sigma,\boldsymbol\alpha,\boldsymbol\beta,z)$ representing $Y$, and consider its conjugate diagram 
\[
\bar{\mathcal{H}}=(-\Sigma,\boldsymbol\beta,\boldsymbol\alpha,z)
\]
which also represents $Y$. We have a canonical identification map
\[
\mathrm{id}:CF^-(\mathcal{H})\rightarrow CF^-(\bar{\mathcal{H}}),
\]
and composing it with $\Psi_{\bar{\mathcal{H}}\rightarrow \mathcal{H}}$ gives the involutive action 
\[
\iota_Y = \Psi_{\bar{\mathcal{H}}\rightarrow \mathcal{H}}\circ \mathrm{id},
\]
which is a homotopy autoequivalence, well-defined up to homotopy, satisfying $\iota_Y^2 \sim \mathrm{id}$. Note that, since $\iota_Y$ acts on the set of $\mathrm{Spin}^c$-structures on $Y$ by conjugation, if we are given a self-conjugate $\mathrm{Spin}^c$-structure $\mathfrak{s}$ on $Y$, the action of $\iota_Y$ restricts to $CF^-(Y,\mathfrak{s})$. Also, if we consider the involutive Heegaard Floer chain complex 
\[
CFI^-(Y) = \mathrm{Cone}(1+\iota_Y:CF^-(Y)\rightarrow CF^-(Y)),
\]
i.e. $CFI^-(Y) = CF^-(Y)\otimes \mathbb{F}_2[U,Q]/(Q^2)$ together with the mapping cone differential 
\[
\partial_I = \partial + Q(1+\iota_Y)
\]
where $\partial$ denotes the differential of $CF^-(Y)$, then we have a splitting 
\[
CFI^- (Y) = \bigoplus_{\{\mathfrak{s},\bar{\mathfrak{s}}\}\subset \mathrm{Spin}^c(Y)} CFI^-(Y,\{\mathfrak{s},\bar{\mathfrak{s}}\}).
\]
along the conjugation-orbits of $\mathrm{Spin}^c$-structures on $Y$. Here, $\bar{\mathfrak{s}}$ denotes the conjugate of $\mathfrak{s}$.

One can also consider a similar construction regarding knot Floer homology, which again gives an action well-defined up to homotopy. In particular, given a knot $K$ inside a 3-manifold $Y$ with a self-conjugate $\mathrm{Spin}^c$-structure $\mathfrak{s}$, we can define a homotopy skew-autoequivalence $\iota_K$ on $CFK_{UV}(Y,K,\mathfrak{s})$, satisfying $\iota^2_K\sim \xi_K$. Here, $\xi_K$ denotes the Sarkar involution, defined in \cite{sarkar2015moving}; since it squares to identity up to homotopy, we have $\iota^4_K \sim \mathrm{id}$. Note that $\iota_K$ is a skew-autoequivalence because its action intertwines the actions of $U$ and $V$ with the actions of $V$ and $U$, respectively.

Since the naturality and functoriality of Heegaard Floer homology is known only up to first order, it is a priori unclear whether involutive Heegaard Floer homology should also be first-order natural and functorial. Fortunately, those results were proven by Hendricks, Hom, Stoffregen, and Zemke in \cite{hendricks2022naturality}. In particular, its transition maps are also well-defined up to homotopy, and given cobordism data $(W,\mathfrak{s},\gamma)$ as before, where $\mathfrak{s}$ is now assumed to be self-conjugate, we have an associated cobordism map 
\[
F^I_{W,\mathfrak{s},\gamma}:CFI^-(Y_1,\mathfrak{s}\vert_{Y_1})\rightarrow CFI^-(Y_2,\mathfrak{s}\vert_{Y_2}),
\]
whose homotopy class is determined by the smooth isotopy class of $(W,\mathfrak{s},\gamma)$.

\subsection{Involutive bordered Floer homology}
Bordered Floer theory is a version of Heegaard Floer theory for 3-manifolds with boundary. In particular, given an oriented 3-manifold $Y$ with a connected parametrized boundary $\partial Y$ (such manifolds are called bordered 3-manifolds), one associates to it a type-D structure $\widehat{CFD}(Y)$ and a type-A structure $\widehat{CFA}(Y)$. Furthermore, if $\partial Y$ has two connected components, then one can associate to it a type DA structure $\widehat{CFDA}(Y)$; note that we can also consider type DD and type AA structures, but we will not consider them here.

Bordered Floer homology is useful when computing Heegaard Floer homology of glued manifolds. Given two bordered 3-manifolds $Y_1,Y_2$ with a prescribed identification $-\partial Y_1 = \partial Y_2$, we have a pairing formula 
\[
\widehat{CFA}(Y_1)\boxtimes \widehat{CFD}(Y_2) \simeq \widehat{CF}(Y_1 \cup Y_2),
\]
where $\boxtimes$ denotes the box-tensor product, which gives a pairing of a type-D structure with a type-A structure and produces a chain complex as an output. There are lots of versions of pairing formulae; for example, when $Y_1$ has instead two boundary components, say $\partial_1 Y_1$ and $\partial_2 Y_1$, and we are given an identification $-\partial_2 Y_1 = \partial Y_2$, then we have 
\[
\widehat{CFD}(Y_1 \cup Y_2) \simeq \widehat{CFDA}(Y_1) \boxtimes \widehat{CFD}(Y_2).
\]
In general, pairing a ``type D'' boundary component of one bordered 3-manifold with a ``type A'' boundary of another bordered 3-manifold gives a pairing formula.

Recall that, when we have a knot $K$ inside a closed connected oriented 3-manifold $Y$, we can define its knot Floer chain complex, $CFK_{UV}(Y,K)$. Similarly we can consider the case when we have a knot $K$ inside a bordered 3-manifold $Y$, where $\partial Y$ is connected. Here, our choice of basepoints on $K$ gains importance: as in knot Floer theory for knots in closed 3-manifolds, we have two basepoints $z,w$ on $K$, but now we require that $z\in \partial Y$. In this setting, we can define a type-D structure $CFD^-(Y,K)$ and a type-A structure $CFA^-(Y,K)$ over the ring $\mathbb{F}_2[U]$. When we have another bordered 3-manifold $Z$ with a boundary identification $-\partial Y = \partial Z$, then we have gluing formulae 
\[
\begin{split}
    \widehat{CFA}(Z)\boxtimes CFD^-(Y,K) &\simeq CFK^-(Y\cup Z,K), \\
    CFA^-(Y,K)\boxtimes \widehat{CFA}(Z) &\simeq CFK^-(Y\cup Z,K),
\end{split}
\]
where $CFK^-(Y\cup Z,K)$ denotes the truncation of $CFK_{UV}(Y\cup Z,K)$ where the formal variables associated to the basepoint $z$ and $w$ are $0$ and $U$, respectively. Note that we can also consider $\widehat{CFD}(Y,K)$ and $\widehat{CFA}(Y,K)$, which are the truncations of $CFD^-(Y,K)$ and $CFA^-(Y,K)$ by $U=0$; they are related by $\widehat{CFK}(Y\cup Z,K)$ via pairing formulae.

As in the cases of Heegaard Floer theory and knot Floer theory, one can also consider involutive actions in bordered Floer theory. However, the action is not a homotopy autoequivalence anymore. Instead, as defined by Hendricks and Lipshitz \cite{hendricks2019involutivebordered}, it takes the form
\[
\begin{split}
\iota_M &: \widehat{CFDA}(\mathbf{AZ})\boxtimes \widehat{CFD}(M)\rightarrow\widehat{CFD}(M), \\
\iota_M &: \widehat{CFA}(M) \boxtimes \widehat{CFDA}(\overline{\mathbf{AZ}}) \rightarrow \widehat{CFD}(M),
\end{split}
\]
when $M$ is a bordered 3-manifold with one torus boundary, where $\mathbf{AZ}$ denotes the genus 1 Auroux-Zarev piece and $\overline{\mathbf{AZ}}$ is its reverse; see \cite{lipshitz2018bordered} for the definition of $\mathbf{AZ}$. The bordered involutions are related to the involution action on Heegaard Floer theory via a gluing formula; in particular, the map
\begin{equation} \label{eqn:involutive}
\begin{split}
\widehat{CF}(M_1 \cup M_2) &\xrightarrow{\text{pairing}} \widehat{CFA}(M_1) \boxtimes \widehat{CFD}(M_2) \\
&\xrightarrow{\simeq} \widehat{CFA}(M_1)\boxtimes \widehat{CFDA}(\overline{\mathbf{AZ}})\boxtimes \widehat{CFDA}(\mathbf{AZ})\boxtimes \widehat{CFD}(M_2) \\
&\xrightarrow{\iota_1 \boxtimes \iota_2} \widehat{CFA}(M_1) \boxtimes \widehat{CFD}(M_2) \xrightarrow{\text{pairing}} \widehat{CF}(M_1 \cup M_2)
\end{split}
\end{equation}
is homotopic to the involution $\iota_{M_1 \cup M_2}$ on $\widehat{CF}(M_1 \cup M_2)$, as shown in \cite{lipshitz2018bordered}.

Given a knot $K$ in $S^3$, we can also recover the action of $\iota_K$ on $\widehat{CFK}(S^3,K)$ via involutive bordered Floer theory. Consider the 0-framed knot complement $S^3 \setminus K$, which admits an involution 
\[
\iota_{S^3 \setminus K}:\widehat{CFA}(S^3 \setminus K) \boxtimes \widehat{CFDA}(\overline{\mathbf{AZ}})\rightarrow \widehat{CFA}(S^3 \setminus K).
\]
By considering the longitudinal knot $\nu$ inside the $\infty$-framed solid torus $T_\infty$, we also get a type-A structure $\widehat{CFA}(T_\infty,\nu)$. Then, as shown in \cite{kang2022involutive}, there exists a type-D morphism 
\[
f:\widehat{CFD}(T_\infty,\nu)\rightarrow \widehat{CFDA}(\mathbf{AZ})\boxtimes \widehat{CFD}(T_\infty,\nu)
\]
such that the map 
\[
\begin{split}
    \widehat{CFK}(S^3,K) &\simeq \widehat{CFA}(S^3 \setminus K)\boxtimes \widehat{CFD}(T_\infty,\nu) \\ 
    &\xrightarrow{\iota_{S^3 \setminus K}^{-1} \boxtimes f} \widehat{CFA}(S^3 \setminus K)\boxtimes \widehat{CFDA}(\overline{\mathbf{AZ}})\boxtimes \widehat{CFDA}(\overline{\mathbf{AZ}})\boxtimes \widehat{CFD}(T_\infty,\nu) \\
    &\xrightarrow{\simeq} \widehat{CFA}(S^3 \setminus K)\boxtimes \widehat{CFD}(T_\infty,\nu) \simeq \widehat{CFK}(S^3,K)
\end{split}
\]
is homotopic to the truncation of either $\iota_K$ or its homotopy inverse $\iota^{-1}_K$.

\begin{rem}
By the bordered naturality \cite[Theorems 2.4, 2.5, and 2.8]{guth2024invariant}, the action of $\iota_M$ for a bordered 3-manifold $M$ is uniquely determined up to homotopy. However, all arguments in this paper can be modified to make sense even without naturality, as we still have naturality for closed manifolds (so we always have naturality ``after gluing''), as shown in \cite{juhasz2021naturality}.
\end{rem}

\subsection{From summands of $CFK_\mathcal{R}$ to summands of type-D for $\mathcal{R}$-multirectangular knots} \label{subsec:borderedcomplement}
Given a knot $K$ in $S^3$, the Lipshitz-Ozsv\'{a}th-Thurston correspondence \cite[Theorem 11.26]{lipshitz2018bordered} provides a purely algebraic way to compute $\widehat{CFD}(S^3 \setminus K)$ (we always use the Seifert framing, i.e. 0-framing for knot complements) from the knot Floer chain complex of $K$. Since it is purely algebraic, it can be defined for any finitely generated free chain complex over $\mathcal{R} = \mathbb{F}_2[U,V]/(UV)$. We write this correspondence as follows:
\[
\text{chain complex } C \text{ over }\mathcal{R} \mapsto \text{type }D \text{ structure }\mathcal{M}_C.
\]
Up to homotopy equivalence, this correspondence is additive with respect to direct sum, i.e.
\[
\mathcal{M}_{C_1 \oplus \cdots \oplus C_n} \simeq \mathcal{M}_{C_1} \oplus \cdots \oplus \mathcal{M}_{C_n}.
\]

The problem with this correspondence is that, while it is purely algebraic and thus explicitly computable, it is unclear why it should be compatible with respect to various symplectically defined maps. Because of this problem, we would have to work with a completely different way to construct splittings of $\widehat{CFD}(S^3 \setminus K)$ from splittings of $CFK_\mathcal{R}(S^3,K)$. This was done in a joint work of the author with Guth \cite{guth2024invariant}. We will start by summarizing some of its constructions and results. From now on, we will use the bordered naturality \cite[Theorems 2.4, 2.5, and 2.8]{guth2024invariant} from that work to make statements and arguments more concise, although it is not strictly necessary.

We first recall the construction of a chain map $\Lambda_K$ from the endomorphism space of $\widehat{CFD}(S^3,K)$ to the endomorphism space of $CFK_\mathcal{R}(S^3,K)$. Given a degree-preserving type D endomorphism $f:\widehat{CFD}(S^3 \setminus K)\rightarrow \widehat{CFD}(S^3 \setminus K)$, we use the morphism pairing theorem \cite[Theorem 1]{lipshitz2011heegaard} to represent its homotopy class as a hat-flavored Heegaard Floer homology class:
\[
[f]\in \widehat{HF}(S^3 _0 (K\sharp -K),[0]).
\]
This is because we have an identification
\[
-(S^3 \setminus K) \cup (S^3 \setminus K) \simeq S^3 _0 (K\sharp -K).
\]
Then we choose a very large integer $N$ and take the zero spin structure $\mathfrak{s}_0$ of the lens space $L(N,1)$. Considering the cobordism map induced by the canonical 2-handle cobordism $W_{K,N}$ from $S^3 _0 (K\sharp -K)\sharp L(N,1)$ to $S^3 _N (K\sharp -K)$, together with the ``zero'' spin structure $\mathfrak{s}_0$ on $W_{K,N}$ which restricts to the connected sum of the zero spin structure $[0]$ on $S^3 _0 (K\sharp -K)$ and $\mathfrak{s}_0$ on $L(N,1)$, we get the following map:
\[
\widehat{HF}(S^3 _0 (K\sharp -K),[0])\xrightarrow{-\otimes \mathrm{generator}}  \widehat{HF}(S^3 _0 (K\sharp -K),[0]) \otimes \widehat{HF}(L(N,1),\mathfrak{s}_0) \xrightarrow{F_{W,\mathfrak{s}_0}} \widehat{HF}(S^3 _N (K\sharp -K),[0]).
\]
Then we compose it with the large surgery isomorphism:
\[
\Gamma_{K,0}:\widehat{HF}(S^3 _N (K\sharp -K),[0]) \xrightarrow{\simeq} \hat{A}_0(K) \subset HFK_\mathcal{R}(S^3,K\sharp -K).
\]
Finally, we take the map induced on homology by the following homotopy equivalence:
\[
CFK_\mathcal{R}(S^3,K\sharp -K)\simeq \mathrm{End}_\mathcal{R}(CFK_\mathcal{R}(S^3,K)).
\]
Composing everything, we get the desired map (in homology, for simplicity; the same map can in fact be defined in chain level)
\[
\Lambda:H_\ast(\mathrm{End}(\widehat{CFD}(S^3 \setminus K)))\rightarrow H_\ast(\mathrm{End}_\mathcal{R}(S^3,K)).
\]
We will use the following three theorems from \cite{guth2024invariant}, which are all proven using the cobordism map interpretation of composition maps in bordered Floer homology \cite[Theorem 1.1]{cohen2023composition}.

\begin{thm}[{\cite[Corollary 3.14]{guth2024invariant}}] \label{thm:cited_thm1_involution}
    $\Lambda$ homotopy-commutes with conjugations by $\iota_K$ on its codomain and $\iota_{S^3 \setminus K}$ on its domain.
\end{thm}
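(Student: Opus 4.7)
The plan is to check that each constituent map of $\Lambda$ is equivariant with respect to the appropriate involution, and then compose these equivariances. Recall that $\Lambda$ factors as (i) the morphism pairing identification
\[
\mathrm{End}(\widehat{CFD}(S^3 \setminus K)) \simeq \widehat{HF}(S^3 _0(K \sharp -K),[0]),
\]
(ii) connected-sum multiplication by the generator of $\widehat{HF}(L(N,1),\mathfrak{s}_0)$, (iii) the two-handle cobordism map $F_{W_{K,N},\mathfrak{s}_0}$, (iv) the large surgery identification $\Gamma_{K,0}$, and (v) the algebraic identification
\[
CFK_\mathcal{R}(S^3,K \sharp -K) \simeq \mathrm{End}_\mathcal{R}(CFK_\mathcal{R}(S^3,K)).
\]

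For steps (ii)--(iv), I would invoke the involutive naturality and functoriality of Hendricks-Hom-Stoffregen-Zemke \cite{hendricks2022naturality}. The spin$^c$ structure $\mathfrak{s}_0$ on $L(N,1)$ is self-conjugate and its Floer-homological generator is $\iota$-fixed up to homotopy, so (ii) is equivariant via a K\"unneth formula for $\iota$ under connected sum. The spin$^c$ cobordism $(W_{K,N},\mathfrak{s}_0)$ is likewise self-conjugate, so involutive functoriality yields the required homotopy commutation of $F_{W_{K,N},\mathfrak{s}_0}$ with $\iota$, which descends to the hat-flavor. For (iv), $\Gamma_{K,0}$ is $\iota$-equivariant by the standard analysis of involutive knot Floer homology, matching the closed-manifold involution on large surgeries with the restriction of $\iota_{K \sharp -K}$ to $\hat{A}_0$.

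This leaves steps (i) and (v). For (v), the K\"unneth formula for knot Floer theory under connected sum together with the identification of $CFK_\mathcal{R}(-K)$ as the $\mathcal{R}$-dual of $CFK_\mathcal{R}(K)$ identifies $\iota_{K \sharp -K}$ with conjugation by $\iota_K$ on the endomorphism space, the orientation-reversal factor on the $-K$ side being absorbed into the duality. For (i), the pairing formula \eqref{eqn:involutive} applied to the identification $-(S^3 \setminus K)\cup (S^3 \setminus K) \simeq S^3 _0(K \sharp -K)$ expresses the closed involution as the box-tensor of two copies of $\iota_{S^3 \setminus K}$; under the morphism pairing this is precisely conjugation by $\iota_{S^3 \setminus K}$ on $\mathrm{End}(\widehat{CFD}(S^3 \setminus K))$.

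The main obstacle will be making step (i) rigorous, since it mixes bordered and closed versions of $\iota$. The cleanest implementation uses the cobordism-map interpretation of composition of type D morphisms \cite[Theorem 1.1]{cohen2023composition}: composition is realized by a specific spin$^c$ cobordism on the doubled manifold, so that $\Lambda$ itself can be reinterpreted as a chain of cobordism maps between closed self-conjugate spin$^c$ 3-manifolds. Within this reformulation, the entire commutation of $\Lambda$ with $\iota$ reduces to repeated applications of involutive functoriality to self-conjugate spin$^c$ cobordisms, and chaining the five equivariance statements produces the desired homotopy commutation.
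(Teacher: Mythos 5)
The paper does not prove this statement at all: it is imported verbatim from \cite[Corollary 3.14]{guth2024invariant}, and the only hint the paper gives about its proof is the remark that all three cited theorems ``are proven using the cobordism map interpretation of composition maps in bordered Floer homology \cite[Theorem 1.1]{cohen2023composition}.'' Your overall strategy---decompose $\Lambda$ into its five constituent identifications, reduce everything to involutive functoriality of cobordism maps for self-conjugate $\mathrm{Spin}^c$ structures, and handle the bordered-versus-closed mismatch in step (i) via the Cohen--Lipshitz--et al.\ composition-map theorem---is exactly the route the paper attributes to the reference, so at the level of architecture your reconstruction is the right one.

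That said, your treatment of step (v) has a genuine gap. You assert that the K\"unneth identification together with duality identifies $\iota_{K\sharp -K}$ with conjugation by $\iota_K$ on $\mathrm{End}_\mathcal{R}(CFK_\mathcal{R}(S^3,K))$, ``the orientation-reversal factor being absorbed into the duality.'' But the involutive connected sum formula (used explicitly elsewhere in this paper, in the proof of \Cref{prop: involutive_weird_implies_result}) reads $\iota_{K_1\sharp K_2}\sim (1+\Psi\otimes\Phi)\circ(\iota_{K_1}\otimes\iota_{K_2})$: there is a correction term $1+\Psi\otimes\Phi$ that does not disappear under duality, and moreover $\iota_K$ is not an honest involution ($\iota_K^2\sim\xi_K$, the Sarkar map), so conjugation $f\mapsto\iota_K\circ f\circ\iota_K^{-1}$ corresponds to $(\iota_K^{\vee})^{-1}\otimes\iota_K$ rather than $\iota_{-K}\otimes\iota_K$. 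Reconciling the closed involution on $\hat A_0(K\sharp -K)$ with conjugation on the endomorphism space therefore requires an argument that these discrepancies act trivially (or cancel against analogous correction terms arising in step (i) from the $\mathbf{AZ}$-piece gluing), and this is precisely the kind of bookkeeping that the cited corollary in \cite{guth2024invariant} is needed for. As written, your step (v) would not survive scrutiny without it; the rest of the sketch (steps (ii)--(iv), and the acknowledged reduction of (i) to \cite[Theorem 1.1]{cohen2023composition}) is sound in outline.
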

\begin{thm}[{\cite[Proposition 3.1 and the claim in its proof]{guth2024invariant}}] \label{thm:cited_thm2_projectioncorrespondence}
    $\Lambda$ induces a bijection between homotopy classes of degree(or bidegree)-preserving endomorphisms, and especially, homotopy classes of projections.
\end{thm}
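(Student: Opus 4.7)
The plan is to decompose $\Lambda$ into its four constituent pieces, observe that each induces an isomorphism on the appropriate graded summand, and then upgrade the resulting bijection via a compositional compatibility argument to obtain the statement about projections. First, one unpacks $\Lambda$ as the composition of (a) the morphism-pairing isomorphism $H_\ast(\mathrm{End}(\widehat{CFD}(S^3\setminus K)))\cong\widehat{HF}(S^3 _0(K\sharp -K))$ of Lipshitz--Ozsv\'{a}th--Thurston; (b) restriction to the $[0]$ spin-c summand followed by the 2-handle cobordism map $F_{W_{K,N},\mathfrak{s}_0}$, tensored with the generator of $\widehat{HF}(L(N,1),\mathfrak{s}_0)$; (c) the large surgery isomorphism $\Gamma_{K,0}:\widehat{HF}(S^3 _N(K\sharp -K),[0])\xrightarrow{\simeq}\hat{A}_0(K\sharp -K)$; and (d) the K\"{u}nneth-type identification $CFK_\mathcal{R}(S^3,K\sharp -K)\simeq\mathrm{End}_\mathcal{R}(CFK_\mathcal{R}(S^3,K))$. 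Maps (a), (c), and (d) are genuine isomorphisms. For (b), for $N$ sufficiently large the restriction to the $[0]$ summand together with the tensor by the top generator is bijective onto the appropriate sub-summand, essentially because $\mathfrak{s}_0$ is the unique extension of the boundary spin-c structures whose cobordism map realizes the relevant surgery-exact-triangle connecting map. Tracking the absolute gradings through each step shows that degree-preserving endomorphism classes on the left correspond exactly to bidegree-preserving endomorphism classes on the right, so the composition is a bijection on these summands.

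The crucial second step is to upgrade this bijection so that it intertwines composition. I would use the interpretation of bordered composition as a 4-dimensional cobordism map, established by Cohen \cite[Theorem 1.1]{cohen2023composition}: the composition $f_2\circ f_1$ of two type-D endomorphisms, viewed via the pairing theorem as a class in $\widehat{HF}(S^3 _0(K\sharp -K))$, is obtained by applying a pair-of-pants cobordism map to $f_1\otimes f_2$. An analogous connected-sum description governs the ring structure on $CFK_\mathcal{R}(S^3,K\sharp -K)\simeq\mathrm{End}_\mathcal{R}(CFK_\mathcal{R}(S^3,K))$. Stacking these with the auxiliary 2-handle cobordism $W_{K,N}$, using the composition law for Ozsv\'{a}th--Szab\'{o} cobordism maps, and exploiting the compatibility of $\Gamma_{K,0}$ with cobordism maps, one obtains a homotopy-commutative square witnessing that $\Lambda$ is a unital ring homomorphism up to homotopy on endomorphism algebras.

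Granting the ring-homomorphism property and the bijectivity on bidegree-preserving classes, the projection statement is then immediate: $p\circ p\sim p$ if and only if $\Lambda(p)\circ\Lambda(p)\sim\Lambda(p)$, since $\Lambda$ is injective on classes. I expect the main obstacle to lie in the composition-compatibility step, namely gluing Cohen's pair-of-pants cobordism with $W_{K,N}$ and verifying that the resulting 4-manifold map matches composition after passing through $\Gamma_{K,0}$; this demands careful tracking of spin-c structures through the gluing and a uniform choice of $N$ working for all compositions at fixed bidegree. The bordered naturality results of \cite[Theorems 2.4, 2.5, and 2.8]{guth2024invariant} are used precisely to ensure that all these identifications are canonical up to homotopy, so that the commutative-square diagram chasing is legitimate.
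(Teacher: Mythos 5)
First, a caveat on the comparison: the paper does not prove this statement at all --- it is imported verbatim from \cite[Proposition 3.1]{guth2024invariant}, and the only hint the paper gives about the proof is that it (like the other two cited theorems) relies on the cobordism-map interpretation of bordered composition from \cite[Theorem 1.1]{cohen2023composition}. Your decomposition of $\Lambda$ into the four constituent maps is exactly the definition given in the paper, and your use of Cohen's theorem to establish that $\Lambda$ is a unital ring homomorphism up to homotopy is consistent with the route the paper says the cited proof takes. The final step --- deducing the correspondence of projections from injectivity on degree-preserving classes plus multiplicativity --- is also sound at the level of homotopy classes, which is all the statement asserts.

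The genuine gap is in your step (b). You assert that, for $N$ large, tensoring with the generator of $\widehat{HF}(L(N,1),\mathfrak{s}_0)$ and applying $F_{W_{K,N},\mathfrak{s}_0}$ restricts to a bijection from the degree-preserving summand of $\widehat{HF}(S^3_0(K\sharp -K),[0])$ onto the bidegree-preserving summand of $\hat{A}_0(K\sharp -K)$, ``essentially because $\mathfrak{s}_0$ is the unique extension \ldots realizing the surgery-exact-triangle connecting map.'' That sentence is not a proof: the map $\widehat{HF}(S^3_0(J),[0])\to\widehat{HF}(S^3_N(J),[0])$ appearing in the integer-surgery exact sequence has nontrivial kernel and cokernel in general (already visible in the mapping-cone description of $\widehat{HF}(S^3_0)$ in terms of $\hat{A}_0$ and $\hat{B}_0$), so one must actually identify which Maslov-grading summand of $\widehat{HF}(S^3_0(K\sharp -K),[0])$ carries the degree-preserving endomorphism classes, do the same on the $\hat{A}_0$ side after the large-surgery grading shift, and verify that the connecting map is an isomorphism between precisely those summands (e.g.\ by a rank count against the algebraically defined identification $\mathrm{End}(\widehat{CFD}(S^3\setminus K))\simeq\mathrm{End}(\mathcal{M}_{CFK_\mathcal{R}})$, or by exactness of the triangle in the relevant gradings). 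This grading analysis is the actual content of the cited Proposition 3.1, and without it neither injectivity nor surjectivity of $\Lambda$ on degree-preserving classes is established; everything downstream (including the projection statement) depends on it.
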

\begin{thm}[{\cite[Proposition 3.2]{guth2024invariant}}; stated here in terms of hat-flavored HFK\footnote{The original statement in \cite{guth2024invariant} uses the bordered diagram $\mathbb{X}$, which represents the longitudinal knot, together with a free basepoint (on the boundary) inside an infinity-framed solid torus. Here, our statement instead uses $(T_\infty,\nu)$ without extra free basepoint, but the proof is identical, and thus we state this theorem without proof.}] \label{thm:cited_thm3_hatflavor_recovery}
    For any (degree-preserving) endomorphism $f:\widehat{CFD}(S^3 \setminus K)\rightarrow \widehat{CFD}(S^3 \setminus K)$, the box-tensored map
    \[
    \mathrm{id}\boxtimes f:\widehat{CFA}(T_\infty,\nu)\boxtimes \widehat{CFD}(S^3 \setminus K) \rightarrow \widehat{CFA}(T_\infty,\nu)\boxtimes \widehat{CFD}(S^3 \setminus K),
    \]
    where $T_\infty$ denotes the infinity-framed solid torus and $\nu$ denotes its longitudinal knot (with one basepoint on $\partial T_\infty$), is homotopic to the hat-flavored truncation (i.e. $U=V=0$) of $\Lambda(f)$ under the identification
    \[
    \widehat{CFK}(S^3,K) \simeq \widehat{CFA}(T_\infty,\nu)\boxtimes \widehat{CFD}(S^3 \setminus K)
    \]
    given by the pairing theorem \cite[Theorem 11.19]{lipshitz2018bordered}.
\end{thm}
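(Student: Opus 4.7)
The plan is to express both $\mathrm{id}\boxtimes f$ and the hat-flavored truncation of $\Lambda(f)$ as two descriptions of a single Heegaard Floer cobordism map, and then to compare them via naturality. Following the construction of $\Lambda$, I would first apply the morphism pairing theorem \cite[Theorem 1]{lipshitz2011heegaard} to represent $[f]$ as a class $\xi_f \in \widehat{HF}(S^3_0(K\sharp -K),[0])$ under the identification $-(S^3\setminus K)\cup (S^3\setminus K)\simeq S^3_0(K\sharp -K)$. Using the cobordism-map interpretation of bordered composition from \cite[Theorem 1.1]{cohen2023composition}, the endomorphism $\mathrm{id}\boxtimes f$ of $\widehat{CFA}(T_\infty,\nu)\boxtimes\widehat{CFD}(S^3\setminus K)\simeq \widehat{CFK}(S^3,K)$ can then be realized as the cobordism map obtained by pairing $\xi_f$ against the canonical class determined by $(T_\infty,\nu)$, attached along $\partial(S^3\setminus K)$.

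On the other side, the hat-flavored truncation of $\Lambda(f)$ is obtained from $\xi_f$ by (i) applying the $2$-handle cobordism map $F_{W_{K,N},\mathfrak{s}_0}$ for large $N$, (ii) applying the large surgery isomorphism $\Gamma_{K,0}$ to identify the output with the hat-flavored truncation of the $\hat{A}_0$-summand inside $CFK_\mathcal{R}(S^3,K\sharp -K)$, and (iii) passing through the hat-flavored truncation of the algebraic identification $CFK_\mathcal{R}(S^3,K\sharp -K)\simeq \mathrm{End}_\mathcal{R}(CFK_\mathcal{R}(S^3,K))$. I would then check that these two cobordism-map descriptions agree on $\widehat{CFK}(S^3,K)$: both begin with $\xi_f$ and differ only in how the longitudinal evaluation along $K$ is implemented---via bordered pairing with $(T_\infty,\nu)$ versus via large surgery on $S^3_0(K\sharp -K)\sharp L(N,1)$---two procedures related by a sequence of handleslides whose composite acts as the identity on the relevant $\widehat{HF}$-summand.

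The principal obstacle will be pinning down the precise compatibility between the large-surgery description of $\Lambda(f)$ and the bordered-pairing description of $\mathrm{id}\boxtimes f$. Fortunately, \cite[Proposition 3.2]{guth2024invariant} already carries out this verification using the bordered diagram $\mathbb{X}$ (the longitudinal knot together with a free boundary basepoint) in place of $(T_\infty,\nu)$. Since an additional free basepoint on the boundary is invisible to the hat-flavored truncation, the same argument transports to the present setting without modification, justifying the footnote remark that the proof is identical.
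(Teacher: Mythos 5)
Your proposal lands in the same place as the paper: this theorem is stated without proof, by citing \cite[Proposition 3.2]{guth2024invariant} and observing (as in the footnote) that replacing the diagram $\mathbb{X}$ with $(T_\infty,\nu)$ only removes a free boundary basepoint, which is invisible after hat-flavored truncation. Your preliminary sketch of the underlying cobordism-map comparison is consistent with the construction of $\Lambda$ given in the paper, and your final paragraph is precisely the justification the paper itself relies on.
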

\begin{rem}
    When applying \Cref{thm:cited_thm2_projectioncorrespondence}, we will often talk about kernels and images of projection maps on chain complexes and type D modules, as they give direct summands. Given a projection morphism (chain maps for chain complexes and type D morphisms for type D modules) $p,p^\prime$ that are homotopic to each other, we will often use the fact that they images are homotopy equivalent, and kernels are also homotopy equivalent. This is because $\ker(p)$ is homotopy equivalent to the mapping cone of $p$ and $\mathrm{Im}(p)$ is the kernel of $\mathrm{id}+p$.
\end{rem}

It follows from \Cref{thm:cited_thm2_projectioncorrespondence} that splittings of $\widehat{CFD}(S^3 \setminus K)$ are in bijective correspondence up to homotopy with splittings of $CFK_\mathcal{R}(S^3,K)$; then it follows from \Cref{thm:cited_thm1_involution} that this ``splitting correspondence'' maps $\iota_{S^3 \setminus K}$-invariant splittings to $\iota_K$-invariant splittings and vice versa. A priori, it is unclear whether this splitting correspondence is compatible with the one given by the Lipshitz-Ozsv\'{a}th-Thurston correspondence; this turns out to be true, at least up to homotopy, and was proven throughout \cite[Section 4]{guth2024invariant}, but the proof is quite complicated. So, in this paper, we will not rely on that part of \cite{guth2024invariant} and instead develop a way to work around it. The price we pay is that we can only work with the one particular knot that we discuss throughout the paper.

We will put down a very restrictive condition on the knots that we will use. 
\begin{defn}
    A chain complex $C$ over $\mathcal{R}$ is \emph{rectangular} if it admits a model with four free generators $c_1,c_2,c_3,c_4$, where the differential is given by
    \[
    \partial c_1 = U^i c_2 + V^j c_3,\quad \partial c_2 = V^j c_4,\quad \partial c_3 = U^i c_4,\quad \partial c_4 = 0.
    \]
    Also, if $C$ admits a direct summand $D$ (up to homotopy equivalence), we say that $D$ is a \emph{free summand} (of $C$) if it is homotopy equivalent to a free chain complex with one generator and zero differential. We say that $C$ is \emph{simply $\mathcal{R}$-multirectangular} if it is homotopy equivalent to the direct sum of several rectangular complexes, and \emph{$\mathcal{R}$-multirectangular} if it is homotopy equivalent to the direct sum of one free summand and several rectangular complexes. 

    Finally, we say that a knot $K$ is $\mathcal{R}$-multirectangular if $CFK_\mathcal{R}(S^3,K)$ is $\mathcal{R}$-multirectangular.
\end{defn}
A nice property of $\mathcal{R}$-multirectangular complexes is that some of its direct summands can be distinguished from each other only by looking at the hat-flavored truncations.
\begin{defn}
    Given a chain complex $C$ of $\mathcal{R}$-modules, we define its \emph{hat-flavored truncation} $\hat{C}$ as the quotient complex
    \[
    \hat{C}= C\otimes_{\mathcal{R}} \mathbb{F}_2,
    \]
    where $\mathbb{F}_2$ is regarded as an $\mathcal{R}$-module via the identification $\mathbb{F}_2 \simeq \mathcal{R}/(U,V)$.
\end{defn}

\begin{lem} \label{lem: complement is free}
    Let $L,M,N$ be finitely generated chain complex of $\mathcal{R}$-modules satisfying $M=N\oplus L$. Suppose that $M,N$ are free. Then $L$ is homotopy equivalent to a finitely generated free chain complex of $\mathcal{R}$-modules.
\end{lem}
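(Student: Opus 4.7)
The plan is to exploit the fact that $\mathcal{R} = \mathbb{F}_2[U,V]/(UV)$ behaves as a graded local ring. With the Alexander--Maslov bigrading in which $U$ and $V$ are homogeneous of nonzero bidegrees, the only maximal homogeneous ideal of $\mathcal{R}$ is $(U,V)$, with residue field $\mathbb{F}_2$. The graded Nakayama lemma then implies that every finitely generated graded projective $\mathcal{R}$-module is graded free.

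Given this, the argument is short. Interpreting $M = N \oplus L$ as a literal chain-complex decomposition, at each homological degree $i$ the bigraded $\mathcal{R}$-module $L_i$ is a direct summand of the finitely generated free $\mathcal{R}$-module $M_i$. Hence $L_i$ is a finitely generated projective $\mathcal{R}$-module, and by graded Nakayama it is free. Because $M$ is finitely generated as a chain complex, $M_i$ vanishes in all but finitely many degrees, forcing the same for $L$. Thus $L$ is already a finitely generated free chain complex of $\mathcal{R}$-modules, and in particular is homotopy equivalent to one.

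The only potential subtlety is whether the hypothesis $M = N \oplus L$ is meant as a genuine chain-complex isomorphism or only as a homotopy equivalence $M \simeq N \oplus L$. In the latter case, one first passes to minimal models: any finitely generated free chain complex over the graded local ring $\mathcal{R}$ splits as a minimal free subcomplex (with differential landing in $(U,V)$ times the complex) plus acyclic summands of the form $\mathcal{R} \xrightarrow{\cdot 1} \mathcal{R}$. Using the fact that any homotopy equivalence between minimal free complexes is an isomorphism (itself a consequence of graded Nakayama), the homotopy splitting $M_0 \simeq N_0 \oplus L$ gives a section $N_0 \hookrightarrow M_0$ whose composition with the retraction is an automorphism of $N_0$; composing with its inverse turns this into a strict splitting $M_0 = N_0' \oplus K$ with $N_0' \cong N_0$ and $K \simeq L$, after which the argument of the previous paragraph applies to $K$. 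I expect the main obstacle, should this reduction be required, to be the bookkeeping needed to verify that the strict complement $K$ is genuinely homotopy equivalent to $L$; the local-ring freeness argument itself is essentially automatic once this is in place.
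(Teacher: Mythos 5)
Your argument is correct, but it takes a genuinely different route from the paper. The paper's proof is a one-line homological trick: since $N$ is a subcomplex with complement $L$, the mapping cone of the inclusion $N\hookrightarrow M$ is $\mathrm{Cone}(\mathrm{id}_N)\oplus L\simeq L$, and this cone is the free complex $N[1]\oplus M$ (using freeness of \emph{both} $M$ and $N$), which immediately gives the stated conclusion. You instead invoke commutative algebra: $\mathcal{R}=\mathbb{F}_2[U,V]/(UV)$ is connected (bi)graded with graded maximal ideal $(U,V)$, so graded Nakayama makes every finitely generated graded projective free; since $L$ is a module-theoretic direct summand of the free module $M$, it is projective, hence free on the nose. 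This is a valid argument (in the paper's setting all complexes are bigraded, so the graded form of Nakayama is the right one to use), and it actually proves something stronger than the lemma asks for --- $L$ is itself free, not merely homotopy equivalent to a free complex --- and it never uses the freeness of $N$. What the paper's approach buys is the avoidance of any projective-implies-free input, at the cost of only concluding freeness up to homotopy. Your second paragraph, on reducing a homotopy splitting $M\simeq N\oplus L$ to a strict one via minimal models, is unnecessary: the hypothesis $M=N\oplus L$ is meant literally (as the paper's reference to "the inclusion $N\subset M$" confirms), and in the application the reduction to a strict splitting is handled elsewhere, so you can safely drop that admittedly incomplete sketch.
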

\begin{proof}
    $L$ is homotopy equivalent to the mapping cone of the inclusion $N\subset M$. Since $N$ and $M$ are both free, this mapping cone is also free.
\end{proof}

\begin{lem} \label{lem: summand of multirect is multirect}
    Let $C$ be an $\mathcal{R}$-multirectangular complex and $D$ be its direct summand. Then $D$ is either $\mathcal{R}$-multirectangular or simply $\mathcal{R}$-multirectangular.
\end{lem}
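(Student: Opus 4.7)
The plan is to argue via a Krull--Schmidt type uniqueness for direct sum decompositions in the homotopy category of bounded chain complexes of finitely generated free $\mathcal{R}$-modules. Writing $C\simeq F\oplus R_1\oplus \cdots \oplus R_n$ with $F$ a rank-one free summand (possibly absent) and each $R_k$ rectangular, the goal is to show that for any decomposition $C\simeq D\oplus D'$ the indecomposable summands of $D$ form a sub-multiset of $\{F, R_1, \ldots, R_n\}$.

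The first task is to verify that each building block is indecomposable up to homotopy. For $F$ this is immediate. For a rectangular complex $R_{i,j}$ with generators $c_1, c_2, c_3, c_4$, I would compute its homotopy endomorphism ring explicitly: a degree-preserving endomorphism is given by a $4\times 4$ matrix with entries in $\mathcal{R}$ subject to the chain-map equations, after which one quotients by null-homotopies produced by degree $+1$ maps. The expected outcome is that this quotient is a local ring with residue field $\mathbb{F}_2$, so that $0$ and $\mathrm{id}$ are the only homotopy classes of idempotents, forcing $R_{i,j}$ to be indecomposable.

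With indecomposability in hand, each indecomposable summand of $C$ has local endomorphism ring, so the Krull--Schmidt theorem applies and gives uniqueness of any decomposition of $C$ into indecomposables up to permutation and isomorphism of summands. Fully decomposing $D$ and $D'$ into indecomposables and reassembling produces a decomposition of $C$ that must match the prescribed $F\oplus R_1\oplus \cdots \oplus R_n$. Hence the indecomposable summands of $D$ form a sub-multiset of $\{F, R_1, \ldots, R_n\}$. Since $C$ contains at most one copy of $F$, so does $D$, and all other summands of $D$ are rectangular; this is precisely the statement that $D$ is $\mathcal{R}$-multirectangular or simply $\mathcal{R}$-multirectangular.

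The main obstacle is the indecomposability step for $R_{i,j}$: although the complex is small enough to make the computation concrete, one must carefully account for all the null-homotopies arising from the four nontrivial differentials involving $U^i$ and $V^j$. A possible alternative that sidesteps a full Krull--Schmidt citation (and the concern that $\mathcal{R}$ itself is not local) is to argue inductively on $n$: peel off one rectangular summand at a time by producing an explicit projection from $C$ onto $R_n$ and then apply the argument to the complement, reducing to a base case with a single rectangular summand and at most one free summand.
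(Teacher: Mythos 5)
Your strategy is sound and would prove the lemma, but it takes a different route from the paper. The paper does not compute any endomorphism rings: it first reduces to the case where both $D$ and its complement $D'$ are finitely generated free over $\mathcal{R}$ (via \Cref{lem: complement is free}, realizing the complement as a mapping cone), and then invokes the classification theorem of Popovi\'{c} (\cite[Corollary 4.2]{popovic2023link}), which says that every such complex decomposes \emph{uniquely}, up to reordering and homotopy equivalence, into snake complexes and local systems. Matching the decomposition $D\oplus D'$ against the known decomposition $\mathcal{R}\oplus(\text{rectangles})$ of $C$ — using that $\mathcal{R}$ is a snake complex and rectangles are local systems — immediately forces the summands of $D$ to be a sub-multiset of $\{\mathcal{R},R_1,\dots,R_n\}$. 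Your Krull--Schmidt/Azumaya argument proves exactly the uniqueness statement that the paper imports, so it is more self-contained but shifts the burden onto the two steps you flag: (i) that idempotents split in the relevant homotopy category (equivalently, that $D$ and $D'$ are again represented by finitely generated free bigraded complexes — this is where the paper's \Cref{lem: complement is free} enters, and it is genuinely needed in either approach), and (ii) that the homotopy endomorphism ring of a rectangular complex is local. Point (ii) is true — in the bidegree-preserving setting the diagonal coefficients of a chain endomorphism are forced to agree, giving an augmentation to $\mathbb{F}_2$ whose kernel consists of maps strictly lowering the $(U,V)$-adic filtration and hence is nilpotent — but it is the crux of your argument and is left unverified in your sketch; the citation to \cite{popovic2023link} is precisely what buys this for free. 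Your alternative of peeling off rectangles one at a time does not obviously avoid the same issue, since producing the explicit projection onto $R_n$ commuting with the unknown splitting $D\oplus D'$ is again an exchange-property statement.
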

\begin{proof}
    Write $C \simeq D\oplus D^\prime$. By \Cref{lem: complement is free}, we may assume without loss of generality that $D^\prime$ is  finitely generated and free over $\mathcal{R}$. Then we may use \cite[Corollary 4.2]{popovic2023link} to uniquely decompose $D$ and $D^\prime$, up to rearrangement and homotopy equivalence, as follows:
    \[
    D\simeq S_1 \oplus \cdots \oplus S_n \oplus L_1 \oplus \cdots \oplus L_m,\quad D^\prime\simeq S^\prime_1 \oplus \cdots \oplus S^\prime_{n^\prime} \oplus L^\prime_1 \oplus \cdots \oplus L^\prime_{m^\prime},
    \]
    where each $S_i,S^\prime_i$ are snake complexes and $L_j,L^\prime_j$ are local systems. Hence we get
    \[
    C \simeq \left( \bigoplus_i S_i  \right) \oplus \left( \bigoplus_i S^\prime_i  \right) \oplus \left( \bigoplus_i L_i  \right) \oplus \left( \bigoplus_i L^\prime_i  \right).
    \]
    However, since $C$ is $\mathcal{R}$-multirectangular, we already know that it decomposes as
    \[
    C \simeq \mathcal{R} \oplus (\text{rectangular complexes}).
    \]
    Since rectangular complexes are local systems and $\mathcal{R}$ is a snake complex, it follows from the uniqueness part of \cite[Corollary 4.2]{popovic2023link} that 
    \begin{itemize}
        \item either $n=0$, $n^\prime=1$, and $S^\prime_1 \simeq \mathcal{R}$, 
        \item or $n=1$, $n^\prime=0$, and $S_1 \simeq \mathcal{R}$,
    \end{itemize}
    and all $L_i$ and $L^\prime_i$ are rectangular. Therefore $D$ (and also $D^\prime$) are either $\mathcal{R}$-multirectangular or simply $\mathcal{R}$-multirectangular.
\end{proof}

\begin{lem} \label{lem: hat acyclic implies acyclic}
    Let $C$ be a finitely generated free chain complex of $\mathcal{R}$-modules. If the hat-flavored truncation of $C$ is acyclic, then $C$ is acyclic.
\end{lem}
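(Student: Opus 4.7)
The plan is to reduce $C$ to a minimal model by iterated cancellation. Since $\mathcal{R}=\mathbb{F}_{2}[U,V]/(UV)$ is reduced, its units are exactly $\{1\}$: any element of the form $1+f$ with $f\in(U,V)$ can be inverted only when $f$ is nilpotent, and the only nilpotent is $0$. The standard cancellation lemma for finitely generated free chain complexes over a commutative ring then allows me to remove, one pair at a time, any two generators $(x,y)$ with $\partial x = y + (\text{other generators})$, producing a homotopy equivalent complex with two fewer generators. After finitely many steps (the complex is finitely generated), this yields a minimal model $C_{\min}\simeq C$ in which every differential between distinct generators has coefficient in the maximal ideal $(U,V)\subset\mathcal{R}$.

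Next I would pass to hat-flavored truncations. The functor $-\otimes_{\mathcal{R}}\mathbb{F}_{2}$ is additive and preserves chain homotopy equivalence, so $\hat{C}\simeq\hat{C}_{\min}$. By construction every remaining differential in $\hat{C}_{\min}$ vanishes, since its coefficient lies in $(U,V)$, so $\hat{C}_{\min}$ has trivial differential and its homology is simply the $\mathbb{F}_{2}$-span of the generators of $C_{\min}$. If $\hat{C}$ is acyclic then so is $\hat{C}_{\min}$, which forces $C_{\min}$ to have no generators at all. Hence $C\simeq C_{\min}=0$ and in particular $C$ is acyclic.

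The main subtlety I anticipate is showing that after exhausting all cancellable (i.e.\ unit-coefficient) arrows the remaining coefficients really lie in $(U,V)$ rather than in some larger set of non-units of $\mathcal{R}$—note that $\mathcal{R}$ is not local, and in principle an element like $1+U+V$ is neither a unit nor lies in $(U,V)$. In the knot Floer setting every complex carries the Alexander--Maslov bigrading, which forces each differential coefficient to be a single monomial $U^{a}V^{b}$ with $ab=0$; any such monomial with constant term equals $1$ and has therefore already been cancelled, so everything left is a nonconstant monomial lying in $(U,V)$. Alternatively, one can bypass the cancellation argument entirely and invoke Popovic's structure theorem \cite[Corollary 4.2]{popovic2023link}, as in the proof of \Cref{lem: summand of multirect is multirect}, to split $C$ up to homotopy as a direct sum of snake complexes and local systems; each such indecomposable summand has all nontrivial differentials in $(U,V)$ by definition, and the same hat-flavored argument then applies summand by summand.
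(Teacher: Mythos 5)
Your proof is correct and takes a genuinely different (and more elementary) primary route than the paper, which simply invokes Popovic's structure theorem \cite[Corollary 4.2]{popovic2023link} to write $C$ as a direct sum of snake complexes and local systems and then observes that neither type of summand has acyclic hat-flavored truncation, so there can be no summands at all. Your cancellation-to-minimal-model argument avoids the classification entirely, at the cost of having to worry about which coefficients can be cancelled; your alternative at the end is exactly the paper's proof. The subtlety you flag is real and worth emphasizing: as literally stated, with no grading hypothesis, the lemma is \emph{false}. For instance, the two-generator complex with $\partial x = (1+U)y$ has acyclic hat-flavored truncation (since $1+U\mapsto 1$), but $1+U$ is a non-unit non-zero-divisor in $\mathcal{R}$, so the complex has nontrivial homology $\mathcal{R}/(1+U)\mathcal{R}$ in the degree of $y$. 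The lemma is only true for (Alexander--Maslov) bigraded complexes, where every differential coefficient is a monomial and hence either a unit or an element of $(U,V)$ — which is the situation in every application in the paper, and is also implicitly assumed by the paper's own proof, since Popovic's classification is a statement about bigraded knot-Floer-type complexes. Your version has the advantage of making this hypothesis visible; it would be cleaner still to state the bigradedness explicitly as an assumption of the lemma rather than recovering it from the ambient knot Floer context.
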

\begin{proof}
    We use \cite[Corollary 4.2]{popovic2023link} to write
    \[
    C \simeq S_1 \oplus \cdots \oplus S_n \oplus L_1 \oplus \cdots \oplus L_m
    \]
    for snake complexes $S_i$ and local systems $L_j$. From the definitions of snake complexes and local systems, it is clear that their hat-flavored truncations are never acyclic. Therefore we deduce that $C$ is acyclic.
\end{proof}

\begin{lem} \label{lem:hat-flavor-separation}
    Let $C$ be a (bigraded) free $\mathcal{R}$-multirectangular complex and $D_1,D_2$ be its free direct summands. Suppose that $H_\ast(\hat{D}_i)$ is 5-dimensional vector space over $\mathbb{F}_2$ for each $i$. If $H_\ast(\hat{D}_1)$ is (bigraded) isomorphic to $H_\ast(\hat{D}_2)$, then $D_1$ is (bigraded) $\mathcal{R}$-linearly homotopy equivalent to $D_2$.
\end{lem}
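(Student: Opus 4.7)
The plan is to combine a dimension count, Popovic's uniqueness of decomposition \cite[Corollary 4.2]{popovic2023link}, and a direct analysis of rectangle bigradings. First, applying \Cref{lem: summand of multirect is multirect} shows that each $D_i$ is either $\mathcal{R}$-multirectangular or simply $\mathcal{R}$-multirectangular. Since the hat-flavored truncation of $\mathcal{R}$ is $1$-dimensional over $\mathbb{F}_2$ while that of a rectangular complex is $4$-dimensional (all rectangular differentials involve positive powers of $U$ or $V$ and hence vanish mod $(U, V)$), the hypothesis $\dim_{\mathbb{F}_2} H_\ast(\hat{D}_i) = 5$ forces $D_i \simeq \mathcal{R} \oplus R_i$ for exactly one rectangular summand $R_i$. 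In particular $\hat{D}_i$ has zero differential, so $\hat{D}_i = H_\ast(\hat{D}_i)$ as bigraded $\mathbb{F}_2$-vector spaces.

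The second step is to pin down the bigrading of the $\mathcal{R}$-free summand across $D_1$ and $D_2$. Popovic's uniqueness applied to $C$ shows that $C$ has a single $\mathcal{R}$-free summand at some canonical bigrading $(a_0, b_0)$. For each $i$, the $\mathcal{R}$-free summand of $D_i$ together with the Popovic decomposition of a complement $D_i'$ in $C$ reassemble into $C$'s canonical decomposition; by uniqueness, the $\mathcal{R}$-free summand of $D_i$ has to coincide with the one coming from $C$, and hence sits at bigrading $(a_0, b_0)$, independent of $i$. Consequently, $\hat{D}_i = \mathbb{F}_2\langle (a_0, b_0)\rangle \oplus \hat{R}_i$ as bigraded vector spaces, and the bigraded isomorphism $\hat{D}_1 \cong \hat{D}_2$ cancels the common $(a_0,b_0)$-summand to yield a bigraded isomorphism $\hat{R}_1 \cong \hat{R}_2$.

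Finally, the four bidegrees of a rectangular complex with exponents $p, q \geq 1$ and base bigrading $(c, d)$ are the explicit linear expressions $(c, d)$, $(c + 2p - 1, d + p)$, $(c + 2q - 1, d - q)$, $(c + 2p + 2q - 2, d + p - q)$, which are distinct and uniquely determine $(c, d, p, q)$: the lowest-Maslov corner recovers $(c, d)$, and the differences to the two intermediate corners read off $p$ and $q$ from their Alexander components (with Alexander distinguishing the intermediate corners in the degenerate case $p = q$). Thus $R_1$ and $R_2$ share all parameters, yielding $R_1 \simeq R_2$ and hence $D_1 \simeq D_2$ bigradedly. I expect the second step to be the main subtlety: if the $\mathcal{R}$-free summands of $D_1$ and $D_2$ were allowed to sit at different bigradings, the four ``rectangle bidegrees'' on each side could be nontrivially shifted between the two sides and the argument would break down, so Popovic's uniqueness is the essential input.
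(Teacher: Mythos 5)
Your proof is correct, and it reaches the paper's conclusion by a partly different route. The first step coincides with the paper's: \Cref{lem: summand of multirect is multirect} plus the mod-4 dimension count forces $D_i \simeq \mathcal{R} \oplus R_i$ with exactly one rectangular summand. Where you diverge is in pinning down the bigrading of the free summands. The paper does this with the $U$-localization: localizing kills rectangular summands and turns each free summand into $\mathbb{F}_2[U,U^{-1}]$, so $H_\ast(U^{-1}D_i)$ is a rank-one summand of the rank-one module $H_\ast(U^{-1}C)$ and hence independent of $i$; this fixes $F_1 \simeq F_2$. You instead apply the bigraded uniqueness of decomposition \cite[Corollary 4.2]{popovic2023link} to $C \simeq D_i \oplus D_i'$ (with $D_i'$ free by \Cref{lem: complement is free}, a point you use implicitly and should state): since $C$ has exactly one snake summand, namely $\mathcal{R}$ at a fixed bigrading, the $\mathcal{R}$-summand of each $D_i$ must coincide with it. Both arguments exploit the same underlying fact that $C$ has a unique free summand; yours leans harder on the bigraded form of Popovic's uniqueness, while the paper's localization argument is a tool it reuses elsewhere (e.g.\ \Cref{lem:locally_acyclic_evenhat}). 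For the final step the paper simply asserts that a rectangular complex is determined up to bigraded homotopy equivalence by the bigraded homology of its hat-flavored truncation, whereas you justify it by recovering the parameters from the four corner bidegrees; your version is more explicit (modulo fixing the grading conventions), and the degenerate-case caveat you flag is handled correctly.
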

\begin{proof}
    We know from \Cref{lem: summand of multirect is multirect} that $D_1$ and $D_2$ are both either $\mathcal{R}$-multirectangular or simply $\mathcal{R}$-multirectangular. If it is simply $\mathcal{R}$-multirectangular, then since the hat-flavored truncation of any rectangular complex has 4-dimensional homology, we deduce that $H_\ast(\hat{D}_i)$ should be a multiple of 4, a contradiction. Hence we see that each $D_i$ is $\mathcal{R}$-multirectangular. Furthermore, the same reasoning shows that it cannot have more than one rectangular summand, so we get 
    \[
    D_i \simeq F_i \oplus R_i,
    \]
    where $F_i$ is free and $R_i$ is rectangular.

    Now we consider the \emph{$U$-localizations} of $D_i$; given a chain complex $C$ over $\mathcal{R}$, we define its $U$-localization as the chain complex over $\mathbb{F}_2[U,U^{-1}]$ that we get from $C$ by first truncating $C$ by $V=0$ and then formally inverting $U$. It is clear that the $U$-localization of a free summand gives $\mathbb{F}_2[U,U^{-1}]$ (with zero differential) up to homotopy equivalence and the $U$-localization of a rectangular summand is acyclic. Hence the homology of $U$-localizations of $C,D_1,D_2$ are free of rank 1 over $\mathbb{F}_2[U,U^{-1}]$. However, since $D_1,D_2$ are direct summands of $C$, we see that they should be all (bigraded) isomorphic. This means that $U$-localizations of $F_1$ and $F_2$ are (bigraded) homotopy equivalent over $\mathbb{F}_2[U,U^{-1}$. However, it is clear that homotopy equivalence classes of $F_i$ are determined by the bidegree of the generator of their homology, so we deduce that
    \[
    F_1 \simeq F_2,
    \]
    and hence we also get $H_\ast(\hat{R}_1)\simeq H_\ast(\hat{R}_2)$. However, since $R_1$ and $R_2$ are rectangular, their homotopy equivalence classes are determined by the homology of their hat-flavored truncations. Thus we have
    \[
    R_1 \simeq R_2,
    \]
    and therefore $D_1 \simeq D_2$, as desired.
\end{proof}

\begin{cor} \label{cor: technicalcor}
    Let $C$ be a (bigraded) free $\mathcal{R}$-multirectangular complex and $D_1,D_2,E_1,E_2$ be direct summands of $C$ satisfying
    \[
    C \simeq D_1 \oplus E_1 \simeq D_2 \oplus E_2.
    \]
    Suppose that $H_\ast(\hat{D}_1)\simeq H_\ast(\hat{D}_2)$ and they are 5-dimensional over $\mathbb{F}_2$. Then $D_1 \simeq D_2$ and $E_1 \simeq E_2$.
\end{cor}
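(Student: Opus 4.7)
My plan is to obtain $D_1 \simeq D_2$ directly from \Cref{lem:hat-flavor-separation}, and then to deduce $E_1 \simeq E_2$ by a cancellation argument based on the uniqueness of the Popovic--Zemke snake/local-system decomposition \cite[Corollary 4.2]{popovic2023link}.

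For the first step, I would observe that $D_1$ and $D_2$ are direct summands of the free complex $C$, hence free themselves by \Cref{lem: complement is free}, and by hypothesis their hat-flavored truncations have isomorphic 5-dimensional homology, so the hypotheses of \Cref{lem:hat-flavor-separation} are met and give $D_1 \simeq D_2$.

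For the second step, I would first use the analysis already carried out in the proof of \Cref{lem:hat-flavor-separation} to write each $D_i$ in the form $F_i \oplus R_i$ with $F_i \simeq \mathcal{R}$ and $R_i$ rectangular. Since $C$ is $\mathcal{R}$-multirectangular its canonical decomposition contains exactly one free summand, and this summand is already contributed by $D_i$; hence $E_i$ contains no free summand and by \Cref{lem: summand of multirect is multirect} must be simply $\mathcal{R}$-multirectangular. Writing $C$ in its canonical form $\mathcal{R} \oplus R_{a_1} \oplus \cdots \oplus R_{a_k}$ and matching it with the expansion $C \simeq D_i \oplus E_i \simeq F_i \oplus R_i \oplus (\text{rectangular summands of } E_i)$, the uniqueness part of \cite[Corollary 4.2]{popovic2023link} forces $R_i$ to be homotopy equivalent to one of the $R_{a_j}$, and forces $E_i$ to be homotopy equivalent to the direct sum of the remaining rectangular summands of $C$. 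Since $R_1 \simeq R_2$ follows from $D_1 \simeq D_2$ (via the uniqueness of decomposition applied to $D_i$), the multisets of rectangular summands appearing in $E_1$ and $E_2$ coincide, giving $E_1 \simeq E_2$.

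The main obstacle is making this cancellation rigorous via the uniqueness statement; once that is in place, no further computation is required. An alternative route would be to apply \Cref{lem:hat-flavor-separation} directly to the $E_i$, using that $H_\ast(\hat C) \simeq H_\ast(\hat D_i) \oplus H_\ast(\hat E_i)$ implies $H_\ast(\hat E_1) \simeq H_\ast(\hat E_2)$, but that lemma is stated under a 5-dimensionality hypothesis which need not hold for the complements $E_i$, so the uniqueness-based cancellation above is the cleaner route.
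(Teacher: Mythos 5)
Your proposal is correct and follows essentially the same route as the paper: $D_1 \simeq D_2$ is exactly an application of \Cref{lem:hat-flavor-separation}, and $E_1 \simeq E_2$ is obtained by the same cancellation the paper invokes, namely passing to free models via \Cref{lem: complement is free} and using the uniqueness of the snake/local-system decomposition of Popovic--Zemke. Your write-up merely spells out the cancellation (matching $D_i \simeq \mathcal{R} \oplus R_i$ against the canonical decomposition of $C$) that the paper leaves implicit.
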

\begin{proof}
    The first part, i.e. $D_1 \simeq D_2$, is just \Cref{lem:hat-flavor-separation}. Then $E_1 \simeq E_2$ follows from \Cref{lem: complement is free} and the unique decomposability up to homotopy equivalence of free (bigraded) finitely generated chain complexes over $\mathcal{R}$ \cite[Corollary 1.2]{popovic2023link}.
\end{proof}

This corollary allows us to prove the following proposition, which will be very useful later on.

\begin{prop} \label{prop:keyprop}
    Let $K$ be an $\mathcal{R}$-multirectangular knot. Suppose that $CFK_\mathcal{R}(S^3,K)$ admits $\iota_K$-invariant direct summands $C,D$ such that $H_\ast(\hat{C})$ is 5-dimensional over $\mathbb{F}_2$. Consider the projection endomorphism $p$ which is identity on $C$ and zero on $D$. Let $p^\prime$ denote the projection of $\widehat{CFD}(S^3 \setminus K)$ satisfying $\Lambda(p^\prime) \sim p$, which is defined uniquely up to homotopy by \Cref{thm:cited_thm2_projectioncorrespondence}. Then $M=\mathrm{Im}(p^\prime)$ and $N=\ker(p^\prime)$ are $\iota_{S^3 \setminus K}$-invariant (up to homotopy) direct summands of $\widehat{CFD}(S^3 \setminus K)$ satisfying $M\simeq \mathcal{M}_C$ and $N\simeq \mathcal{M}_D$.
\end{prop}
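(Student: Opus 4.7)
The plan is as follows. First, $M$ and $N$ are direct summands by construction (as the image and kernel of a projection), so I only need to establish $\iota_{S^3\setminus K}$-invariance up to homotopy. Since $C$ and $D$ are $\iota_K$-invariant, $p$ homotopy-commutes with $\iota_K$. \Cref{thm:cited_thm1_involution} says $\Lambda$ intertwines conjugation by $\iota_{S^3\setminus K}$ on its domain with conjugation by $\iota_K$ on its codomain, so
\[
\Lambda\bigl(\iota_{S^3\setminus K}^{-1} \circ p' \circ \iota_{S^3\setminus K}\bigr) \sim \iota_K^{-1} \circ \Lambda(p') \circ \iota_K \sim \iota_K^{-1} \circ p \circ \iota_K \sim p \sim \Lambda(p').
\]
By the bijectivity part of \Cref{thm:cited_thm2_projectioncorrespondence}, $\iota_{S^3\setminus K}^{-1} \circ p' \circ \iota_{S^3\setminus K} \sim p'$, which gives the invariance of both $M$ and $N$.

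For the identification $M \simeq \mathcal{M}_C$ (the identification $N\simeq \mathcal{M}_D$ will follow by running the same argument with the complementary projection $\mathrm{id}+p'$), I would avoid appealing to any compatibility of $\Lambda$ with the Lipshitz--Ozsv\'ath--Thurston correspondence on splittings. Instead, I would combine the LOT isomorphism $\widehat{CFD}(S^3\setminus K)\simeq \mathcal{M}_{CFK_\mathcal{R}(S^3,K)}$, the additivity of $\mathcal{M}_-$ under direct sums, and the Krull--Schmidt decomposition of $\mathcal{R}$-multirectangular complexes \cite[Corollary 4.2]{popovic2023link}, to argue that any direct summand of $\widehat{CFD}(S^3\setminus K)$ is homotopy equivalent to $\mathcal{M}_F$ for some direct summand $F$ of $CFK_\mathcal{R}(S^3,K)$. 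Applied to $M$, this produces (by \Cref{lem: summand of multirect is multirect}) an $\mathcal{R}$-multirectangular summand $F$ with complement $F'$ satisfying $CFK_\mathcal{R}(S^3,K)\simeq F\oplus F'$ and $M\simeq \mathcal{M}_F$.

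The next step is to compare $\widehat{CFA}(T_\infty,\nu)\boxtimes M$ computed in two ways. On one hand, \Cref{thm:cited_thm3_hatflavor_recovery} applied to $p'$ identifies $\mathrm{id}\boxtimes p'$ with the hat-flavored truncation of $\Lambda(p')\sim p$, whose image is $\hat C$, giving $\widehat{CFA}(T_\infty,\nu)\boxtimes M\simeq \hat C$. On the other hand, the purely algebraic LOT identity $\widehat{CFA}(T_\infty,\nu)\boxtimes \mathcal{M}_X\simeq \hat X$, which is a direct consequence of the inductive construction of $\mathcal{M}_X$ in \cite[Theorem 11.26]{lipshitz2018bordered}, gives $\widehat{CFA}(T_\infty,\nu)\boxtimes M\simeq \hat F$. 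Therefore $\hat F\simeq \hat C$, and in particular $H_\ast(\hat F)$ is 5-dimensional over $\mathbb{F}_2$ and bigraded-isomorphic to $H_\ast(\hat C)$. Applying \Cref{cor: technicalcor} to the two decompositions $CFK_\mathcal{R}(S^3,K)\simeq C\oplus D\simeq F\oplus F'$ yields $F\simeq C$ and $F'\simeq D$, hence $M\simeq \mathcal{M}_F\simeq \mathcal{M}_C$ as desired.

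The hardest step will be the Krull--Schmidt claim for direct summands of $\widehat{CFD}(S^3\setminus K)$, since the unique decomposition result \cite[Corollary 4.2]{popovic2023link} is stated for chain complexes over $\mathcal{R}$ rather than for type-D structures. I expect this to go through by transporting the decomposition of $\mathcal{M}_{CFK_\mathcal{R}(S^3,K)}$ across the LOT equivalence, writing it as a sum of indecomposable pieces $\mathcal{M}_{\mathcal{R}}$ and $\mathcal{M}_{R_i}$ (for rectangular $R_i$), and observing that an arbitrary summand $M$ must be a subsum of these indecomposables up to homotopy.
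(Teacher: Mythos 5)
Your argument for the $\iota_{S^3\setminus K}$-invariance is exactly the paper's (both reduce to \Cref{thm:cited_thm1_involution} plus the bijectivity in \Cref{thm:cited_thm2_projectioncorrespondence}), but your identification $M\simeq\mathcal{M}_C$ runs along a genuinely different track. The paper never tries to show that $M$ itself has the form $\mathcal{M}_F$. Instead it introduces the ``reference'' projection $\tilde p$ onto $\mathcal{M}_C$ inside $\mathcal{M}_C\oplus\mathcal{M}_D\simeq\widehat{CFD}(S^3\setminus K)$, pushes it forward to $\Lambda(\tilde p)$ with image $\tilde C$ and kernel $\tilde D$, uses \Cref{thm:cited_thm3_hatflavor_recovery} and \Cref{cor: technicalcor} to get $\tilde C\simeq C$, $\tilde D\simeq D$, and then builds a homotopy autoequivalence conjugating $p$ to $\Lambda(\tilde p)$; pulling back through $\Lambda^{-1}$ and upgrading the resulting homotopy equivalence to an honest automorphism on a reduced model, it concludes that $p'$ and $\tilde p$ are conjugate, so their images and kernels are isomorphic. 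Both arguments funnel through the same comparison $H_\ast(\hat{\tilde C})\simeq H_\ast(\hat C)$ (resp.\ $\hat F\simeq\hat C$) and \Cref{cor: technicalcor}; the difference is where the ``rigidity'' is located --- the paper puts it in the conjugating automorphism, you put it in a structure theorem for summands of $\widehat{CFD}(S^3\setminus K)$.

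The step you flag as hardest is a genuine gap as written, and it is precisely the kind of statement the paper is engineered to avoid. To say that an arbitrary direct summand of $\widehat{CFD}(S^3\setminus K)$ is homotopy equivalent to a subsum of the pieces $\mathcal{M}_{\mathcal{R}},\mathcal{M}_{R_i}$, you need both a Krull--Schmidt property for the homotopy category of type-D structures over the torus algebra and the indecomposability of each $\mathcal{M}_{R_i}$ \emph{as a type-D structure}; neither follows from \cite[Corollary 4.2]{popovic2023link}, which lives entirely on the $\mathcal{R}$-complex side, and neither is established anywhere in the paper. These facts are true (they follow, e.g., from the immersed-curve classification of type-D structures over the torus algebra), so your route is fillable, but it imports external structural input rather than staying within the toolbox the paper sets up. Separately, your parenthetical plan to get $N\simeq\mathcal{M}_D$ by ``running the same argument with $\mathrm{id}+p'$'' does not work verbatim: \Cref{lem:hat-flavor-separation} requires the 5-dimensionality hypothesis, which holds for $\hat C$ but not for $\hat D$. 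This is harmless, since \Cref{cor: technicalcor} already gives you $F'\simeq D$ alongside $F\simeq C$, and your Krull--Schmidt step gives $N\simeq\mathcal{M}_{F'}$; but the conclusion for $N$ should be routed that way rather than through a second application of the separation lemma.
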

\begin{proof}
    The $\iota_{S^3\setminus K}$-invariance follows directly from \Cref{thm:cited_thm1_involution}; we only have to prove that $M\simeq \mathcal{M}_C$ and $N\simeq \mathcal{M}_D$. To show this, we start by observing that, by the additivity of Lipshitz-Ozsv\'{a}th-Thurston correspondence, we have a homotopy equivalence
    \[
    \widehat{CFD}(S^3 \setminus K)\simeq \mathcal{M}_{CFK_\mathcal{R}(S^3,K)} \simeq \mathcal{M}_C \oplus \mathcal{M}_D.
    \]
    Let $\tilde{p}$ denote the projection of $\widehat{CFD}(S^3 \setminus K)$ that is identity on $\mathcal{M}_C$ and zero on $\mathcal{M}_D$. By \Cref{thm:cited_thm2_projectioncorrespondence}, we have a projection $\Lambda(\tilde{p})$ on $CFK_\mathcal{R}(S^3,K)$, unique up to homotopy. Denote its image as $\tilde{C}$ and kernel by $\tilde{D}$. Then we have two splittings
    \[
    CFK_\mathcal{R}(S^3,K) \simeq C\oplus D \simeq \tilde{C}\oplus \tilde{D}.
    \]
    By \Cref{thm:cited_thm3_hatflavor_recovery}, we see that
    \[
    H\ast(\hat{\tilde{C}})\simeq \widehat{CFA}(T_\infty,\nu)\boxtimes \mathcal{M}_C.
    \]
    Since $\widehat{CFA}(T_\infty,\nu)$ admits a model with one $\iota_1$-generator, no $\iota_0$-generator, zero differential, and no higher $A_\infty$-operations. we see that this box tensor product is just $H_\ast(\hat{C})$, i.e. we have
    \[
    H_\ast(\hat{\tilde{C}})\simeq H_\ast(\hat{C}).
    \]
    Since we assumed that $H_\ast(\hat{C})$ is 5-dimensional, we can apply \Cref{cor: technicalcor} to deduce that
    \[
    C\simeq \tilde{C},\quad D\simeq \tilde{D}.
    \]
    Hence we can consider the following homotopy autoequivalence:
    \[
    f:CFK_\mathcal{R}(S^3,K)\xrightarrow{\simeq} C\oplus D \xrightarrow{\simeq} \tilde{C}\oplus \tilde{D} \xrightarrow{\simeq} CFK_\mathcal{R}(S^3,K).
    \]
    By construction, $f$ satisfies $f \circ p \circ f^{-1} \sim \Lambda(\tilde{p})$. Since its left and right hand sides are both projections, it follows from \Cref{thm:cited_thm2_projectioncorrespondence} that $\Lambda^{-1}(f)$ is a homotopy autoequivalence of $\widehat{CFD}(S^3 \setminus K)$ that is well-defined up to homotopy, and 
    \[
    \Lambda^{-1}(f)^{-1} \circ p^\prime \circ \Lambda^{-1}(f) \sim \tilde{p}.
    \]

    Now, by taking a (finitely generated and free) reduced model of $\widehat{CFD}(S^3 \setminus K)$, so that it has no acyclic type D direct summands, we may homotope the homotopy equivalence $\Lambda^{-1}(f)$ to an automorphism. To see why, choose any chain map representative of the homotopy class $\Lambda^{-1}(f)$, which we again write as $\Lambda^{-1}(f)$, and consider $(\Lambda^{-1}(f))^N$ for very large postive integers $N$; by the finite generation of $\widehat{CFD}(S^3 \setminus K)$, as $N\rightarrow \infty$, $(\Lambda^{-1}(f))^N$ should stabilize to an automorphism of some type D direct summand $X$ of $\widehat{CFD}(S^3 \setminus K)$ whose inclusion is a homotopy equivalence. So we have a splitting
    \[
    \widehat{CFD}(S^3 \setminus K) \simeq X\oplus Y
    \]
    where $Y$ is acyclic. But we have taken a reduced model of $\widehat{CFD}(S^3 \setminus K)$ so that it has no acyclic direct summand. Hence $Y=0$, which implies that any sufficiently large power of $\Lambda^{-1}(f)$ (and thus $\Lambda^{-1}(f)$ itself) should have zero kernel. By applying the finite generation condition again, we see that $\Lambda^{-1}(f)$ is in fact an automorphism of $\widehat{CFD}(S^3 \setminus K)$.

    Hence we see that $p^\prime$ is conjugate to $p$ via (type D) automorphisms. This implies that their images and kernels are isomorphic type D structures. Therefore we get
    \[
    M \simeq \mathrm{Im}(p^\prime) \simeq \mathrm{Im}(\tilde{p})\simeq \mathcal{M}_C,\quad N\simeq \ker(p^\prime)\simeq \ker(\tilde{p})\simeq \mathcal{M}_D,
    \]
    as desired.
\end{proof}

We will present one more lemma that will also be useful later.

\begin{lem}\label{lem:identify_free_summand}
    Let $K$ be a smoothly slice knot such that there exists a splitting $\widehat{CFD}(S^3 \setminus K) \simeq M\oplus N$ satisfying $M\simeq \widehat{CFD}(S^3 \setminus U)$, where $U$ denotes an unknot. Consider the induced splitting (via the pairing formula \cite[Theorem 11.19]{lipshitz2018bordered})
    \[
    \begin{split}
        \widehat{HFK}(S^3,K) &\simeq H_\ast (\widehat{CFA}(T_\infty,\nu)\boxtimes \widehat{CFD}(S^3 \setminus K)) \\
        &\simeq H_\ast (\widehat{CFA}(T_\infty,\nu) \boxtimes M)\oplus H_\ast (\widehat{CFA}(T_\infty,\nu) \boxtimes N) \\
        &\simeq \mathbb{F}_2 \oplus H_\ast (\widehat{CFA}(T_\infty,\nu) \boxtimes N);
    \end{split}
    \]
    note that, in the last line, we used the pairing formula again to write 
    \[
    H_\ast (\widehat{CFA}(T_\infty,\nu) \boxtimes N) \simeq \widehat{HFK}(S^3,U) \simeq \mathbb{F}_2.
    \]
    Then the generator of the $\mathbb{F}_2$ summand admits a lift to a homology class of a cycle in $CFK_\mathcal{R}(S^3,K)$ which generates its direct summand isomorphic to $\mathcal{R}$.
\end{lem}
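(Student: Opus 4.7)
The plan is to transport the given bordered summand to the knot Floer side via the map $\Lambda$, and then to rigidify the resulting summand to $\mathcal{R}$ using the Popovic decomposition from \cite[Corollary~4.2]{popovic2023link}.

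First I would pick a type-D projection $p':\widehat{CFD}(S^3\setminus K)\rightarrow \widehat{CFD}(S^3\setminus K)$ with image $M$ and kernel $N$ coming from the hypothesized splitting. By \Cref{thm:cited_thm2_projectioncorrespondence}, the homotopy class of $p'$ corresponds under $\Lambda$ to a homotopy class of $\mathcal{R}$-linear projections on $CFK_\mathcal{R}(S^3,K)$; fix a representative $p=\Lambda(p')$ and write $C=\mathrm{Im}(p)$, $E=\ker(p)$, so that $CFK_\mathcal{R}(S^3,K)\simeq C\oplus E$. By \Cref{thm:cited_thm3_hatflavor_recovery}, the hat-flavored truncation $\hat{p}$ of $p$ is homotopic to $\mathrm{id}\boxtimes p'$ under the pairing identification $\widehat{CFK}(S^3,K)\simeq \widehat{CFA}(T_\infty,\nu)\boxtimes \widehat{CFD}(S^3\setminus K)$, and $\mathrm{Im}(\mathrm{id}\boxtimes p')\simeq \widehat{CFA}(T_\infty,\nu)\boxtimes M\simeq \widehat{CFK}(S^3,U)\simeq \mathbb{F}_2$; hence $\hat{C}\simeq \mathbb{F}_2$ up to homotopy.

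The key step is then to upgrade this to $C\simeq \mathcal{R}$. Passing to a reduced model of $C$ with no acyclic direct summands and applying \cite[Corollary~4.2]{popovic2023link}, we may write $C\simeq \bigoplus_i S_i\oplus \bigoplus_j L_j$, a direct sum of snake complexes and local systems. Since every internal differential appearing in such a building block is divisible by $U$ or $V$, the hat-flavored truncation of each summand has vanishing differential, so $\dim_{\mathbb{F}_2}\hat{C}$ equals the total number of free generators appearing in the decomposition. The complex $\mathcal{R}$ is the unique snake whose hat-truncation is one-dimensional, and every other snake or local system contributes at least two generators; therefore $\dim_{\mathbb{F}_2}\hat{C}=1$ forces exactly one summand, with $C\simeq \mathcal{R}$.

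To conclude, let $\tilde{g}$ be the generator of $C\simeq \mathcal{R}$. Because $\mathcal{R}$ has zero differential, $\tilde{g}$ is a cycle; its image under the hat-truncation map $CFK_\mathcal{R}(S^3,K)\rightarrow \widehat{CFK}(S^3,K)$ is the generator of $\hat{C}\simeq \widehat{CFA}(T_\infty,\nu)\boxtimes M$, which is precisely the generator of the $\mathbb{F}_2$-summand in the statement. Since $C$ is a direct summand of $CFK_\mathcal{R}(S^3,K)$ isomorphic to $\mathcal{R}$ and generated by $\tilde{g}$, the class $[\tilde{g}]$ is the required lift. I expect the main obstacle to be the rigidification step: without Popovic's classification together with the observation that hat-truncations of snakes and local systems have vanishing differential, it would not be clear that a direct summand with one-dimensional hat-truncation must actually be $\mathcal{R}$ rather than some more complicated free $\mathcal{R}$-complex with nontrivial $U$- and $V$-actions.
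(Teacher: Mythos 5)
Your proposal is correct, and its first half (choosing the projection with image $M$, transporting it via $\Lambda$ using \Cref{thm:cited_thm2_projectioncorrespondence}, and using \Cref{thm:cited_thm3_hatflavor_recovery} to see that the resulting summand $C$ of $CFK_\mathcal{R}(S^3,K)$ has one-dimensional hat-flavored homology) is exactly the paper's argument. Where you genuinely diverge is the rigidification step $C\simeq\mathcal{R}$. The paper invokes the sliceness hypothesis to write $CFK_\mathcal{R}(S^3,K)\simeq\mathcal{R}\oplus Z$ with $Z$ having acyclic $U$-localization, uses unique decomposability to conclude that either $C$ absorbs the $\mathcal{R}$ or $C$ is itself locally acyclic, rules out the second case via the parity constraint of \Cref{lem:locally_acyclic_evenhat}, and then kills the remainder of $C$ using \Cref{lem: hat acyclic implies acyclic}. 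You instead apply Popovic's snake/local-system classification directly to $C$ and count generators of a reduced model: since the differential of a reduced complex over $\mathcal{R}$ vanishes after setting $U=V=0$, the dimension of $H_\ast(\hat C)$ equals the number of generators, and the only standard piece with a single generator is $\mathcal{R}$. This is cleaner and, notably, does not use the sliceness of $K$ at all, so your argument proves a slightly stronger statement. Two small points you should make explicit: first, before applying the classification you need $C$ to admit a finitely generated free model, which follows from \Cref{lem: complement is free} applied to the splitting $CFK_\mathcal{R}(S^3,K)\simeq C\oplus E$; second, the assertion that every snake other than $\mathcal{R}$ and every local system has at least two generators is the load-bearing claim and deserves a word of justification (nontrivial snakes have at least one arrow, hence at least two generators, and local systems are supported on closed curves, hence have an even number of generators).
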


\begin{lem} \label{lem:locally_acyclic_evenhat}
    Let $Z$ be a finitely generated free chain complex over $\mathcal{R}$ whose $U$-localization is acyclic. Then $H_\ast(\hat{Z})$ is an even-dimensional vector space.
\end{lem}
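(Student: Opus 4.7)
The plan is to invoke Popovic's structural classification \cite[Corollary 4.2]{popovic2023link} of finitely generated free chain complexes over $\mathcal{R}$, which decomposes any such $Z$, uniquely up to homotopy equivalence and reordering of summands, as
\[
Z \simeq S_1 \oplus \cdots \oplus S_n \oplus L_1 \oplus \cdots \oplus L_m,
\]
where each $S_i$ is a snake complex and each $L_j$ is a local system. Both the $U$-localization functor and the hat-flavored truncation commute with direct sums and preserve homotopy equivalence, so the argument reduces to checking the behavior of these two types of summands individually.

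The first key step is to observe that snake and local system summands behave oppositely under $U$-localization. A snake complex has an odd number $2k+1$ of generators arranged in a zigzag of alternating $U$- and $V$-arrows; setting $V = 0$ kills the $V$-arrows and pairs up $2k$ of these generators, so the $U$-localization is free of rank one over $\mathbb{F}_2[U, U^{-1}]$. A local system, by contrast, has an even number $2\ell$ of generators arranged in a closed cycle of alternating $U$- and $V$-arrows, and the same procedure pairs all of them up, rendering the $U$-localization acyclic. The acyclicity hypothesis on the $U$-localization of $Z$ therefore forces $n = 0$, so up to homotopy $Z$ is a direct sum of local systems only.

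The final step is to read off $\dim_{\mathbb{F}_2} H_\ast(\hat{L}_j)$ from the standard presentation of each local system. Since every arrow in that presentation is a positive power of $U$ or $V$, the differential of $\hat{L}_j = L_j \otimes_\mathcal{R} \mathbb{F}_2$ vanishes identically, so $H_\ast(\hat{L}_j) = \hat{L}_j$ is an $\mathbb{F}_2$-vector space of dimension $2\ell_j$. Summing over $j$,
\[
\dim_{\mathbb{F}_2} H_\ast(\hat{Z}) = \sum_{j=1}^{m} 2\ell_j,
\]
which is even, as desired.

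The main obstacle is the structural bookkeeping rather than any deep new input: one must justify both the even parity $2\ell_j$ of the generator count of a local system and the rank-one $U$-localization profile of a snake complex directly from the combinatorial definitions in \cite{popovic2023link}, and confirm that the classification and these two features are available without any further hypothesis on $Z$ such as a bigrading.
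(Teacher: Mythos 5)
Your argument is correct, but it follows a genuinely different route from the paper's. You decompose $Z$ over $\mathcal{R}$ itself via Popovic's classification \cite[Corollary 4.2]{popovic2023link} into snake complexes and local systems, use the acyclicity of the $U$-localization to rule out snake summands, and then read off the even generator count of each local system after setting $U=V=0$. The paper instead passes immediately to $Z_U = Z\otimes_{\mathcal{R}}\mathcal{R}/(V)$, a finitely generated free complex over $\mathbb{F}_2[U]$ with $U$-torsion homology, and invokes the standard decomposition \cite[Lemma 4.4]{dai2019involutive} of such complexes into two-step pieces $\mathbb{F}_2[U]\xrightarrow{U^n}\mathbb{F}_2[U]$, each contributing exactly $2$ to $\dim H_\ast(\hat{Z})$ (using that $\hat{Z}\cong \hat{Z}_U$). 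The paper's route is lighter: it needs only the one-variable structure theory over $\mathbb{F}_2[U]$ and in particular sidesteps the issue you flag at the end, namely whether Popovic's classification requires a bigrading, and it avoids having to verify the combinatorial facts about snakes and local systems (odd versus even generator counts, rank-one versus acyclic $U$-localizations). Your route, on the other hand, yields strictly more structural information --- it identifies $Z$ up to homotopy as a direct sum of local systems --- and is consistent with how the surrounding lemmas in the paper (e.g.\ \Cref{lem: hat acyclic implies acyclic} and \Cref{lem:hat-flavor-separation}) already deploy Popovic's result; the combinatorial facts you need are exactly the ones asserted there, so nothing in your argument fails. In the paper's actual application (inside the proof of \Cref{lem:identify_free_summand}) the complex $Z$ is a summand of $CFK_\mathcal{R}(S^3,K)$ and hence bigraded, so your residual worry is harmless in context.
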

\begin{proof}
    Denote $Z_U = Z\otimes_{\mathcal{R}} \mathcal{R}/(V)$, which is a chain complex over $\mathbb{F}_2[U]$. Then the $U$-localization of $Z$ is $U^{-1}Z_U$, which is acyclic by assumption. Since $Z$ is finitely generated, $Z_U$ is also finitely generated, and thus we know that $H_\ast(Z_U)$ is the direct sum of torsion modules of the form $\mathbb{F}_2[U]/(U^n)$ for various integers $n>0$. By \cite[Lemma 4.4]{dai2019involutive}, since $Z_U$ is free, we know that $Z_U$ is homotopy equivalent to the direct sum of complexes of the form
    \[
    (\mathbb{F}_2[U]\xrightarrow{U^n\cdot \mathrm{id}}\mathbb{F}_2[U])
    \]
    for the same set of integers $n$. For each of those complexes, the homology of its hat-flavored truncation is clearly 2-dimensional. Therefore $H_\ast(\hat{Z})$ should be even-dimensional.
\end{proof}
\begin{proof}[Proof of \Cref{lem:identify_free_summand}]
    Consider the projection endomorphism $p$ of $\widehat{CFD}(S^3 \setminus K)$ which is identity on $M$ and zero on $N$. Then, by \Cref{thm:cited_thm2_projectioncorrespondence}, there exists a projection $\Lambda(p)$ on $CFK_\mathcal{R}(S^3,K)$, unique up to homotopy. Denote its image and kernel by $C$ and $D$, respectively. Then, by \Cref{thm:cited_thm3_hatflavor_recovery}, we know that $H_\ast(\hat{C})$ is 1-dimensional (over $\mathbb{F}_2$).

    We claim that $C$ is homotopy equivalent to $\mathcal{R}$ with zero differential. Since $K$ is smoothly slice, $CFK_\mathcal{R}(S^3,K)$ admits a splitting of the form $\mathcal{R}\oplus Z$, where the $U$-localization of $Z$ is acyclic. Then, by \cite[Corollary 1.2]{popovic2023link}, we have two cases:
    \begin{itemize}
        \item either $C\simeq \mathcal{R}\oplus Z$ for some $Z$ with acyclic $U$-localization,
        \item or $C$ itself has acyclic $U$-localization. 
    \end{itemize}
    If the latter case is true, then by \Cref{lem:locally_acyclic_evenhat}, $H_\ast(\hat{C})$ should be even-dimensional, a contradiction. Hence the former case is true, and moreover, since we have
    \[
    H_\ast(\hat{Z})\oplus \mathbb{F}_2 \simeq H_\ast(\hat{C}) \simeq \mathbb{F}_2,
    \]
    $\hat{Z}$ is acyclic (over $\mathbb{F}_2$), and thus we can apply \Cref{lem: hat acyclic implies acyclic} to deduce that $Z$ itself is acyclic (over $\mathcal{R}$). Thus we get
    \[
    C\simeq \mathcal{R}\oplus Z\simeq \mathcal{R}.
    \]
    Denote the generator of its homology by $\tilde{c}$. By \Cref{thm:cited_thm3_hatflavor_recovery}, we know that the hat-flavored truncation of $C$ is exactly the given $\mathbb{F}_2$-summand. Therefore $\tilde{c}$ is a lift of $c$; since $\tilde{c}$ generates the homology of an $\mathcal{R}$-summand of $CFK_\mathcal{R}(S^3,K)$, the lemma is proven.
\end{proof}

\section{An ansatz towards a proof of \Cref{thm:main}} \label{sec:ansatz}

Recall that a triple $(Y,W,f)$ is said to be a \emph{cork} if
\begin{itemize}
    \item $Y$ is a homology 3-sphere,
    \item  $W$ is a contractible 4-manifold with an identification $\partial W = Y$,
    \item $f:Y\rightarrow Y$ is a diffomorphism which does not extend smoothly to $W$.
\end{itemize}
Given a knot $K$ (in $S^3$) and a slice disk $D$ (in $B^4$) bounding $K$, we can consider the $(+1)$-surgery $B^4 _{+1}(D)$ of $B^4$ along the disk $D$, defined as follows. Choose a point $p$ in the interior of $D$. Removing a small ball neighborhood $N(p)$ from $B^4$ (and also $D$) gives a concordance $C=D\setminus N(p)$, inside $S^3 \times I = B^4 \setminus N(p)$, from the unknot to $K$. Choose a tubular neighborhood $N(C)\simeq D^2 \times S^1 \times I$ of $C$. Then we can perform a $(+1)$-surgery along $C$, by removing $N(C)$ from $S^3 \times I$ and gluing back along the $+1$ slope. This operation produces a homology cobordism between $S^3 _{+1}(K)$ and $S^3$; we then cap off the $S^3$ boundary by attaching a standard 4-ball to it. It is clear that the diffeomorphism class (rel boundary) of $B^4 _{+1}(D)$ depends only on the smooth isotopy class (rel boundary) of $D$. Furthermore, since $B^4_{+1}(D)$ is a homology ball that is simply-connected, it is always a contractible manifold, bounding the homology sphere $S^3 _{+1}(K)$.

To produce a cork, it suffices to construct a homology sphere $Y$, together with a pair of contractible 4-manifolds bounded by $Y$ which are diffeomorphic as smooth manifolds with boundary but not diffeomorphic rel boundary. Such manifolds can be constructed as follows. Given a knot $K$, suppose that we are given a diffeomorphism $f: S^3 \rightarrow S^3$ which fixes $K$ pointwise. Such a diffeomorphism induces a \emph{deform-spun disk} $D_{K,f}$ which bounds $K\sharp -K$, which we will define in \Cref{def: deform spun} below.

\begin{defn}\label{def:spinning}
Let $a$ be a properly embedded smooth arc in $D^3$.
  Furthermore, let $\phi \colon I \times D^3 \to D^3$ be an isotopy of $D^3$ such that
  $\phi_0 = \mathrm{id}_{D^3}$, $\phi_t \vert_{\partial D^3} = \mathrm{id}_{\partial D^3}$ for every $t \in I$, and $\phi_1(a) = a$.
  Then the \emph{deform-spun} slice disk $D_{a, \phi} \subset D^4$ is defined by taking
  \[
  \bigcup_{t \in I} \{t\} \times \phi_t(a) \subset I \times D^3,
  \]
  and rounding the corners along $\{0, 1\} \times \partial D^3$.
  When the arc $a$ is understood, we simply write $D_{\phi}$ instead of $D_{a, \phi}$.
\end{defn}

It was observed in \cite[Lemma 3.3]{juhasz2018stabilization} that given an orientation-preserving self-diffeomorphism $f$ of $(D^3, a)$ such that $f|_{\partial D^3} = \mathrm{id}_{\partial D^3}$, there exists an isotopy $\phi \colon I \times D^3 \to D^3$, such that $\phi_1 = f$. Furthermore, the isotopy class of the deform-spun disk $D_{a, \phi}$ only depends on $f$. Hence we will denote $D_{a,\phi}$ by $D_{a, f}$ for simplicity.

\begin{defn}\label{def: deform spun}
Let $K$ be a knot in $S^3$, and suppose that $B$ is an open 3-ball that intersects $K$ in an unknotted arc.
Then $(S^3 \smallsetminus B, K \smallsetminus B)$ is diffeomorphic to a ball-arc pair $(D^3, a)$.
Suppose that we are given a diffeomorphism $f \in \mathrm{Diff}(S^3, K)$ that is the identity on $B$.
Then the \emph{deform-spun} slice disk $D_{K, f} \subset B^4$ for $-K \# K$ is defined
to be $D_{a, f|_{S^3 \smallsetminus B}}$.
\end{defn}

Note that, when $f$ is not smoothly isotopic to the identity map, the induced deform-spun disk $D_{K,f}$ is in general not smoothly isotopic to the standard ribbon disk $D_{K,\text{id}}$. Then we have two contractible 4-manifolds $B^4 _{+1}(D_{K,\text{id}})$ and $B^4 _{+1}(D_{K,f})$ bounding $S^3 _{+1}(K\sharp -K)$. Since $D_{K,f}$ is always diffeomorphic to $D_{K,\mathrm{id}}$ when we allow nontrivial action on the boundary 3-sphere (see \cite[Proposition 3.2]{juhasz2020distinguishing} for details), $B^4 _{+1}(D_{K,\text{id}})$ and $B^4 _{+1}(D_{K,f})$ are also diffeomorphic (where we do not fix the boundary), and thus our construction defines a cork of the form $(S^3 _{+1}(K\sharp -K),B^4 _{+1}(D_{K,\text{id}}),F)$ for some diffeomorphism $F$ induced by the choice of $f$.

For the purpose of proving \Cref{thm:main}, we will consider the following setting, which will be recalled in \Cref{sec:proof}. We consider the knot $K$ of the form $K=K_0 \sharp K_0$, where $K_0$ is defined as 
\[
K_0 = (2T_{4,5} \sharp -T_{4,9})_{3,-1}.
\]
Then we consider the diffeomorphism $f=f_2 \circ f_1$ of $S^3$, fixing $K$ pointwise, where $f_1$ and $f_2$ are defined as follows. The first diffeomorphism $f_1$ maps the first $K_0$ summand of $K$ to the second summand, and vice versa, as shown in \Cref{fig:knotK}. The second map $f_2$ is defined as the ``half Dehn twist'' diffeomorphism (see also \cite[Section 1.2]{juhasz2018stabilization}), which acts as identity outside a tubular neighborhood $\nu(K)$ of $K$ and acts on $K$ as a half rotation. As discussed above, deform-spinning $K$ along $f$ and performing a $(+1)$-surgery along it defines a cork bounding $S^3 _{+1}(K\sharp -K)$. Proving that this cork survives a stabilization (i.e. performing a connected sum with a copy of $S^2 \times S^2$) is equivalent to showing that $B^4 _{+1}(D_{K,\text{id}}) \sharp (S^2 \times S^2)$ and $B^4 _{+1}(D_{K,f}) \sharp (S^2 \times S^2)$ are not diffeomorphic rel boundary.

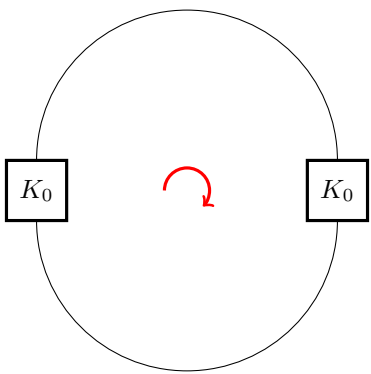
\begin{figure}[h]
\begin{tikzpicture}[squarednode/.style={rectangle, draw=black, very thick, minimum size=8mm}]

\draw (-2,0.4) arc (180:0:2);
\draw (-2,-0.4) arc (180:360:2);
\node[squarednode] (node2) at (2,0) {$K_0$};
\node[squarednode] (node4) at (-2,0) {$K_0$};
\draw[red,very thick,->] (-0.3,0) arc (180:-45:0.3);

\end{tikzpicture}
\caption{The knot $K$. The definition of $f$ starts with rotating along the center by 180 degrees.}
\label{fig:knotK}
\end{figure}

\section{Obstruction from involutive Heegaard Floer homology}\label{sec:obstruction}

Recall that, given two 3-manifolds $Y_1,Y_2$ with basepoints $z_1,z_2$, respectively, a 4-dimensional cobordism $W$ between them, a self-conjugate $\mathrm{Spin}^c$-structure $\mathfrak{s}$ on $W$, and a smooth path $\gamma$ on $W$ from $z_1$ to $z_2$, one can associate to $(W,\mathfrak{s},\gamma)$ an $\mathbb{F}_2[U,Q]/(Q^2)$-linear chain map
\[
F^I_{W,\mathfrak{s}}:CFI^-(Y_1,\mathfrak{s}\vert_{Y_1})\rightarrow CFI^-(Y_2,\mathfrak{s}\vert_{Y_2}),
\]
whose homotopy class depends only on the smooth isotopy class of $(W,\mathfrak{s},\gamma)$. When $W$ is a simply-connected 4-manifold bounding a homology sphere $Y$ (which is naturally considered as a cobordism from $S^3$ to $Y$), then it carries a unique $\mathrm{Spin}^c$ structure, and any two possible choices of $\gamma$ are smoothly isotopic, so we may drop $\mathfrak{s}$ and $\gamma$ from our notation and write
\[
F^I_{W}:CFI^-(S^3)\rightarrow CFI^-(Y).
\]
Clearly, if two simply-connected 4-manifolds $W_1$ and $W_2$ bounding $Y$ satisfy $F^I_{W_1}\neq F^I_{W_2}$, then they are not diffeomorphic rel boundary. This fact will be used to develop an obstruction for a cork to stay exotic after one stabilization.

\begin{lem} \label{lem:stabilizedmap}
Let $W$ be a spin 4-manifold bounding a homology sphere $Y$. Consider the induced (non-involutive) cobordism map 
\[
F^-_W:CF^-(S^3)\rightarrow CF^-(Y).
\]
Then the involutive cobordism map
\[
F^I_{W\sharp (S^2 \times S^2)}:CFI^-(S^3)\rightarrow CFI^-(Y)
\]
induced by $W\sharp (S^2 \times S^2)$ is given by $F^I_{W\sharp (S^2 \times S^2)}=QF^-_W$.
\end{lem}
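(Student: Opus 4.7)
The plan is to factor the stabilized cobordism as a composition and reduce the statement to a local computation about $S^2 \times S^2$. Performing the interior connected sum near a small $S^3$ in the interior of $W$, we may write
\[
W \sharp (S^2 \times S^2) \simeq X \cup_{S^3} W,
\]
where $X = (S^2 \times S^2) \setminus (B^4 \sqcup B^4)$ is the twice-punctured $S^2 \times S^2$, regarded as a spin cobordism from $S^3$ to $S^3$. The spin condition on $W$ ensures that $W \sharp (S^2 \times S^2)$ carries a spin (and hence self-conjugate $\mathrm{Spin}^c$) structure, so the involutive cobordism map is defined. By the functoriality of involutive cobordism maps \cite{hendricks2022naturality}, this decomposition gives
\[
F^I_{W \sharp (S^2 \times S^2)} \simeq F^I_W \circ F^I_X.
\]
The proof then reduces to the key claim that $F^I_X \simeq Q \cdot \mathrm{id}_{CFI^-(S^3)}$: granting this and using that $F^I_W$ is $\mathbb{F}_2[Q]/(Q^2)$-linear with $F^I_W(Qx) = QF^-_W(x)$, one obtains $F^I_W \circ (Q \cdot \mathrm{id}) = QF^-_W$, where the right-hand side is understood as the $Q$-linear extension of $F^-_W$ followed by multiplication by $Q$.

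Establishing the key claim amounts to two assertions. First, the underlying non-involutive cobordism map $F^-_X$ is null-homotopic. This can be checked directly from the handle structure of $X$, which arises from $S^3 \times I$ by attaching two $2$-handles along the components of the $(0,0)$-framed Hopf link in $S^3$: the resulting composition of the two $2$-handle maps in Heegaard Floer homology is null-homotopic, as one can verify with Ozsv\'{a}th--Szab\'{o}'s holomorphic triangle formalism. Second, the ``$Q$-coefficient'' of $F^I_X$, which is well-defined up to homotopy once the first assertion holds, is homotopic to the identity map on $CF^-(S^3)$.

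The second assertion is the main obstacle. The subtlety is that once $F^-_X$ is null-homotopic, there is a great deal of flexibility in choosing chain homotopies between $F^-_X \iota_{S^3}$ and $\iota_{S^3} F^-_X$, and one must verify that the specific homotopy produced by the involutive cobordism map construction is itself homotopic to the identity map. I would attack this by picking an explicit Heegaard diagram representing $X$ adapted to its $2$-handle decomposition, endowing it with explicit models for the conjugation involutions $\iota_{S^3}$ on the two boundary components, and computing $F^I_X$ on the chain level directly; the expected output is that, after the expected cancellations in the underlying cobordism map, the remaining ``conjugation contribution'' is literally the canonical identification between the two copies of $CF^-(S^3)$. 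Once the $Q$-component is verified to equal the identity up to homotopy, the key claim and hence the lemma follow at once.
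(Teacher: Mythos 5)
Your overall strategy coincides with the paper's: factor $W\sharp(S^2\times S^2)$ as the twice-punctured $S^2\times S^2$ cobordism $X\colon S^3\to S^3$ followed by $W$, invoke functoriality of the involutive cobordism maps, and reduce everything to the claim that $F^I_X\simeq Q\cdot\mathrm{id}$. The algebraic bookkeeping at the end (writing $F^I_W=G+QH$ with $G\sim F^-_W$, so that post-composing with $F^I_W$ after multiplication by $Q$ kills the $QH$ term and leaves $QF^-_W$) is exactly what the paper does.

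The gap is in your treatment of the key claim. You correctly identify that the hard part is showing the ``$Q$-coefficient'' of $F^I_X$ is the identity, but what you offer for it is a plan (``I would attack this by picking an explicit Heegaard diagram\dots the expected output is\dots''), not an argument; nothing in the proposal actually rules out the conjugation contribution being, say, null-homotopic rather than the identity, and an explicit chain-level computation of an involutive cobordism map is a substantial undertaking (one must track the diffeomorphism between a Heegaard diagram and its conjugate through the doubling construction of the involutive 2-handle maps). This claim is precisely \cite[Theorem~13.1]{hendricks2022naturality}, which is what the paper cites, and citing it closes the gap immediately; re-deriving it from scratch is neither necessary nor accomplished here. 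As a side remark, your first assertion (that $F^-_X$ is null-homotopic) does not require any holomorphic triangle count: the spin structure on $X$ has $c_1=0$, $\chi(X)=2$, $\sigma(X)=0$, so the map shifts the absolute grading by $-1$, and an odd-degree $\mathbb{F}_2[U]$-module map from $CF^-(S^3)\simeq\mathbb{F}_2[U]$ (with zero differential) to itself is necessarily zero.
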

\begin{proof}
Consider the involutive cobordism map $F^I_W$, induced by $W$, and write 
\[
F^I_W = G+QH.
\]
Since truncating involutive Heegaard Floer theory by $Q=0$ recovers (non-involutive) Heegaard Floer theory, we should have 
\[
G = F^I_W\vert_{Q=0} \sim F^-_W.
\]
Now, since the involutive cobordism map induced by $S^2 \times S^2$ is homotopic to the multiplication map by $Q$ \cite[Theorem 13.1]{hendricks2022naturality}, we see that 
\[
F^I_{W\sharp (S^2 \times S^2)} \sim F^I_W \circ F^I_{S^2 \times S^2} \sim QF^I_{W} = Q(G+QH) = QG \sim QF^-_{W}, 
\]
as desired.
\end{proof}

\begin{lem} \label{lem:4dimobs}
Let $W_1,W_2$ be homology 4-balls bounding a homology 3-sphere $Y$. If $W_1\sharp (S^2 \times S^2)$ and $W_2 \sharp (S^2 \times S^2)$ are diffeomorphic rel boundary, then $c_{W_1} + c_{W_2}$ (as an element of $HF^-(Y)$) is contained in the image of the action of $1+\iota_Y$ on $HF^-(Y)$.
\end{lem}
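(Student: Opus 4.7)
The plan is to combine \Cref{lem:stabilizedmap} with the assumption that $W_1 \sharp (S^2 \times S^2)$ and $W_2 \sharp (S^2 \times S^2)$ are diffeomorphic rel boundary. Since involutive cobordism maps are well-defined up to $\mathbb{F}_2[U,Q]/(Q^2)$-linear chain homotopy by the smooth isotopy class of the cobordism (with its spin$^c$ structure and basepoint path), diffeomorphism rel boundary implies $F^I_{W_1 \sharp (S^2 \times S^2)} \sim F^I_{W_2 \sharp (S^2 \times S^2)}$. By \Cref{lem:stabilizedmap} this is the same as
\[
QF^-_{W_1} \sim QF^-_{W_2} : CFI^-(S^3) \rightarrow CFI^-(Y).
\]

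The strategy is then to unpack this homotopy and extract the desired identity from its $Q^1$ coefficient. Choose a $Q$-linear homotopy $H : CFI^-(S^3) \rightarrow CFI^-(Y)$ witnessing $QF^-_{W_1} + QF^-_{W_2} = \partial_I H + H \partial_I$, and write its restriction to $CF^-(S^3)$ as $h_0 + Q h_1$, where $h_0, h_1 : CF^-(S^3) \rightarrow CF^-(Y)$ are $\mathbb{F}_2[U]$-linear; by $Q^2 = 0$ this data determines $H$ completely. Expanding $\partial_I H + H \partial_I$ and using $\partial_I = \partial + Q(1 + \iota)$ on both complexes, one finds that the $Q^0$ coefficient of the homotopy equation forces $h_0$ to be a chain map, while the $Q^1$ coefficient gives
\[
F^-_{W_1}(x) + F^-_{W_2}(x) = (1 + \iota_Y) h_0(x) + \partial h_1(x) + h_1(\partial x) + h_0\bigl((1 + \iota_{S^3}) x\bigr).
\]

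Now specialize to $x = 1 \in CF^-(S^3)$, a cycle representing the generator of $HF^-(S^3)$, and pass to homology in $HF^-(Y)$. The terms $\partial h_1(1)$ and $h_1(\partial 1) = 0$ vanish on passage to homology. Moreover, since $\iota_{S^3}$ acts as the identity on $HF^-(S^3) \cong \mathbb{F}_2[U]$, one has $(1 + \iota_{S^3})(1) = \partial \eta$ for some $\eta \in CF^-(S^3)$, so $h_0\bigl((1 + \iota_{S^3})(1)\bigr) = \partial h_0(\eta)$ is a boundary (using that $h_0$ is a chain map). What remains at the homology level is exactly
\[
c_{W_1} + c_{W_2} = (1 + \iota_Y)_{\ast} [h_0(1)] \in \mathrm{Im}(1 + \iota_Y),
\]
which is the claim. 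The only nontrivial bookkeeping issue to take care of is the $Q$-linearity of the chain homotopy and the decomposition $H = h_0 + Q h_1$; once this is set up, the extraction of the coefficient identity and its reduction at the unique generator of $HF^-(S^3)$ is routine, so I do not anticipate a serious obstacle here.
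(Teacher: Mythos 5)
Your proof is correct and follows essentially the same route as the paper: both extract the $Q^0$ and $Q^1$ coefficients of a null-homotopy of $Q(F^-_{W_1}+F^-_{W_2})$, conclude the $Q^0$ part is a chain map, and evaluate the $Q^1$ identity at $1\in CF^-(S^3)$. Your treatment is in fact slightly more careful than the paper's, since you explicitly distinguish $\iota_{S^3}$ from $\iota_Y$ and dispose of the $h_0\bigl((1+\iota_{S^3})(1)\bigr)$ term, which the paper elides.
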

\begin{proof}
For each $i=1,2$, the involutive cobordism map
\[
F^I_{W_i\sharp (S^2 \times S^2)} : CFI^-(S^3)\rightarrow CFI^-(Y)
\]
induced by $W_i\sharp (S^2 \times S^2)$ is given by $QF^-_{W_i}$, where $F^-_{W_i}$ denotes the cobordism map
\[
F^-_{W_i} : CF^-(S^3)\rightarrow CF^-(Y)
\]
induced by $W_i$ on the ordinary minus-flavored Heegaard Floer chain complex. Hence, if $W_1\sharp (S^2 \times S^2)$ and $W_2 \sharp (S^2 \times S^2)$ are diffeomorphic rel boundary, then it follows from \Cref{lem:stabilizedmap} that
\[
Q(F^-_{W_1} + F^-_{W_2}) \sim 0.
\]
Let $\tilde{H}=G+QH$ be a nullhomotopy of $Q(F^-_{W_1} + F^-_{W_2})$. Then we have 
\[
\begin{split}
    Q(F^-_{W_1} + F^-_{W_2}) &= \partial_I \tilde{H} + \tilde{H} \partial_I \\
    &= (\partial + Q(1+\iota))(G+QH) + (G+QH)(\partial + Q(1+\iota)) \\
    &= \partial G + G\partial + Q(\iota_Y G + G\iota_Y + \partial H + H \partial), 
\end{split}
\]
where $\partial_I$ denotes the differential on $CFI^-(Y)$. Thus we see that 
\[
\partial G + G \partial = 0,\,F^-_{W_1} + F^-_{W_2} = \iota_Y G + G \iota_Y + H\partial + \partial H.
\]
In other words, $G$ is a chain map and $F^-_{W_1} + F^-_{W_2}$ is homotopic to $\iota_Y G + G \iota_Y$. Hence, if we denote the homology class of the cycle $G(1)$ by $c_G$, then we deduce that
\[
c_{W_1} + c_{W_2} = [F^-_{W_1}(1) + F^-_{W_2}(1)] = [(\iota_Y G + G \iota_Y)(1)] = (1+\iota_Y)(c_G).
\]
Therefore $c_{W_1} + c_{W_2}$ is contained in the image of $1+\iota_Y$.
\end{proof}

\Cref{lem:4dimobs} already gives us an obstruction for homology 4-balls with the same boundary from being smoothly diffeomorphic rel boundary after one stabilization. However, dealing directly with Heegaard Floer homology of $(+1)$-surgeries is not easy, so we will use large surgery formula to reduce our obstruction to a more easily computable one.

\begin{lem}\label{lem:cfkmapsurgery}
Let $C$ be a concordance between knots $K_1$ and $K_2$. For any integer $n>0$, consider the $n$-surgery $(S^3 \times I)_n(C)$ along $C$, which is a homology cobordism from $S^3 _n(K_1)$ to $S^3 _n(K_2)$. Then, when $n$ is sufficiently large, the following diagram commutes.
\[
\xymatrix{
HF^-(S^3 _n (K_1),[0]) \ar[rr]^(0.4){\Gamma_{n,0}}\ar[dd]_{F^-_{(S^3 \times I)_n(C)}} && A_0(K_1) \subset CFK_{UV}(S^3,K_1) \ar@<20pt>[dd]^{F_C} \\
\\
HF^-(S^3 _n (K_2),[0]) \ar[rr]^(0.4){\Gamma_{n,0}} && A_0(K_2) \subset CFK_{UV}(S^3,K_2)
}
\]
Here, $[0]$ denotes the (self-conjugate) zero $\mathrm{Spin}^c$ structure on $n$-surgeries along $K_1$ and $K_2$, $F_{(S^3 \times I)_n(C)}$ denotes the minus-flavored Heegaard Floer cobordism map induced by $(S^3 \times I)_n(C)$, and $F_C$ denotes the cobordism map on knot Floer homology, induced by $C$, endowed with a suitable decoration.
\end{lem}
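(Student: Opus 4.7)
The plan is to decompose the concordance $C$ into elementary cobordisms, verify the commutative square on each piece, and then glue via functoriality of both sides of the diagram.

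First I would decompose $C$, by standard Morse theory on the concordance surface, as a composition of trivial cylinders, births of small unknotted components, saddles (band moves), and deaths of unknotted components, arranged so that the intermediate surface remains annular at every level (this is possible because $C$ has genus zero and connects one knot to one knot). Each elementary piece $C'$ is a decorated concordance between consecutive intermediate knots, and after $n$-surgery it produces a $4$-manifold cobordism $(S^3 \times I)_n(C')$ between the corresponding $n$-surgered $3$-manifolds. By functoriality of the minus-flavored $4$-manifold cobordism map $F^-$ and of Zemke's decorated link cobordism map $F_{-}$ on $CFK_{UV}$, it suffices to verify the square in the statement for each elementary piece.

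Next I would check the commutative square separately for each elementary type. For a trivial cylinder, $(S^3 \times I)_n(C')$ is itself a product cobordism, $F^-_{(S^3 \times I)_n(C')}$ is homotopic to the identity, and $F_{C'}$ restricts to the identity on $A_0$, so the square commutes tautologically. For a saddle piece, $(S^3 \times I)_n(C')$ is naturally a $2$-handle cobordism, and the content of the square is that the large surgery isomorphism $\Gamma_{n,0}$ intertwines the $2$-handle cobordism map on the summand of the zero $\mathrm{Spin}^c$ structure $[0]$ with the band map on the $A_0$ subcomplex of $CFK_{UV}$. This is a direct consequence of the proof of the Ozsv\'{a}th--Sz\'{a}bo large surgery theorem: for sufficiently large $n$, the same Heegaard triple (obtained from a Heegaard diagram for $S^3 \setminus K_1$ adapted to the band by adjoining the $n$-surgery $\beta$-curve) computes both cobordism maps, and the identification of generators that defines $\Gamma_{n,0}$ matches the triangle counts on the two sides. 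For birth and death pieces, $(S^3 \times I)_n(C')$ is a $1$- or $3$-handle cobordism diffeomorphic to a connected sum with $S^3$, and the corresponding map on $CFK_{UV}$ is Zemke's (de)stabilization map; compatibility follows from the tensor-product behavior of connected sums in Heegaard Floer theory together with the statement of the large surgery formula itself. Stacking these elementary squares along $C$ then produces the full commutative diagram.

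The main obstacle will be the saddle step, since it is there that the compatibility of the $2$-handle cobordism map on the surgered manifold with the band map on $A_0$ must be traced through the Heegaard triple count. In practice this can be carried out either by hand by choosing a Heegaard diagram adapted to the band and matching triangle counts for $n$ large, or, more efficiently, by appealing to existing naturality results for the large surgery formula together with Zemke's link cobordism TQFT, which already packages the required compatibility between surgery and cobordism maps.
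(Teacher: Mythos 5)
Your overall strategy---Morse-decompose $C$ and verify the square on elementary pieces---has a genuine gap at its foundation. A concordance is an annulus, so $\#\text{births}+\#\text{deaths}=\#\text{saddles}$; unless $C$ is a product (in which case the lemma is vacuous), your decomposition necessarily passes through levels where the cross-section is a \emph{link} with two or more components, contradicting your claim that the intermediate surface "remains annular at every level." Consequently the elementary squares you propose to verify do not even type-check: for a birth, saddle, or death piece the target of the top horizontal map would have to be an $A_0$-type complex for a multi-component link in a connected sum of surgered manifolds, and the large surgery identification $\Gamma_{n,0}$ for links is a substantially more delicate object (the link surgery formula) that you would need to set up and prove naturality for. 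You could try to repackage birth--fusion pairs into genuine knot-to-knot concordances, but then each piece is again an arbitrary elementary concordance and you have gained nothing. Separately, your saddle step---which you correctly identify as the crux---is asserted rather than proved: the claim that "the same Heegaard triple computes both cobordism maps" and that the generator identification "matches the triangle counts on the two sides" is essentially the entire content of the lemma, and appealing to "existing naturality results for the large surgery formula together with Zemke's link cobordism TQFT" is close to citing the conclusion.

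For comparison, the paper's proof avoids both problems with a single global observation: $\Gamma_{n,0}$ is \emph{itself} a decorated link cobordism map in Zemke's TQFT, namely the one induced by the core disk of the upside-down $2$-handle cobordism from $S^3_{n}(K)$ to $(S^3,K)$. With that reinterpretation, both composites in the square are link cobordism maps, and one only needs to check that the two composite decorated cobordisms (surgered concordance followed by the $2$-handle core for $K_2$, versus the $2$-handle core for $K_1$ followed by $C$) are diffeomorphic rel boundary---which follows by choosing a Morse function on $S^3\times I$ making $C$ a straight cylinder. No triangle counts, no elementary decomposition, and no link-level large surgery formula are needed. If you want to salvage your approach, the missing ingredient is precisely this cobordism-map description of $\Gamma_{n,0}$; once you have it, the piece-by-piece verification becomes unnecessary.
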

\begin{proof}
The large surgery isomorphism can be described as a link cobordism map in the following way. Consider the 2-handle cobordism from $S^3$ to $S^3 _{+n}(K)$ and turn it upside down so that it goes from the $(+n)$-surgery to $S^3$. Then the core of this 2-handle is a smoothly embedded disk $\Sigma_{K,n}$ from the empty link in $S^3 _{+n}(K)$ to $K$ in $S^3$. The map $\Gamma_{n,s}$ can then be described as the cobordism map induced by this map, where the ambient 4-manifold is endowed with the $\mathrm{Spin}^c$-structure given by $s$. This cobordism can also be seen as an ambient 0-surgery cobordism as in \Cref{fig:handlediagram}. Note that this fact was used implicitly in \cite[Section 3]{hendricks2022involutivedual}.

Instead of using the projection map $S^3 \times I\rightarrow I$ as our Morse function, we may choose a different one so that $C$ is a straight cylinder and 1,2,3-handles are attached to its complement. Hence this cobordism commutes up to diffeomorphism rel boundary with the 0-surgery cobordism used for the large surgery isomorphism, and also the $(+n)$-surgery cobordism on the component $K_{disk}$ which corresponds to both $U$ and $K$ (before and after the 1,2,3-handle attachments). Therefore, the link cobordisms which induce the composition of the top and right maps and the composition of the left and bottom maps in the diagram are diffeomorphic rel boundary, and thus the lemma follows.
\end{proof}

\begin{figure}
\centering
\includegraphics[height=4cm]{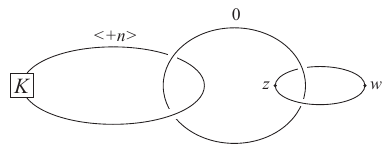}
\caption{A relative Kirby diagram representing the core of a 2-handle cobordism from $S^3 _{+n}(K)$ to $(S^3,K)$. The component corresponding to the knot $K$ is the one with the basepoints $z$ and $w$ drawn.}
\label{fig:handlediagram}
\end{figure}

As a result, we have the following proposition, which now deals only with knot Floer homology.
\begin{prop}\label{prop:obstructionprop}
Let $K$ be a knot and $D_1,D_2$ be two slice disks for $K$. Suppose that $B^4 _{+1}(D_1) \sharp (S^2 \times S^2)$ and $B^4_{+1}(D_2) \sharp (S^2 \times S^2)$ are diffeomorphic rel boundary. Then the classes $t_{D_1},t_{D_2}\in \widehat{HFK}(S^3,K)$ (see \Cref{rem:invarianttD} for their definition) satisfy the condition
\[
t_{D_1} + t_{D_2} \in \mathrm{Im}(1+\iota_K).
\]
\end{prop}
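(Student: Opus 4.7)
The plan is to apply \Cref{lem:4dimobs} at the level of $HF^-(S^3_{+1}(K))$ and then transport the resulting obstruction into $\widehat{HFK}(S^3,K)$ via the large surgery formula of \Cref{lem:cfkmapsurgery}. First, since $W_i = B^4_{+1}(D_i)$ are homology $4$-balls bounding $Y = S^3_{+1}(K)$, \Cref{lem:4dimobs} gives $c_{W_1}+c_{W_2} \in \mathrm{Im}(1+\iota_Y)$ in $HF^-(Y)$.

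Next, I would fix a large integer $N$ and construct a cobordism $V \colon S^3_{+1}(K) \to S^3_{+N}(K)$ equipped with a self-conjugate $\mathrm{spin}^c$ structure restricting to the zero $\mathrm{spin}^c$ structure $[0]$ on both ends, arranged so that $B^4_{+1}(D_i) \cup V$ is diffeomorphic rel boundary to $B^4_{+N}(D_i)$ for each $i$; such a $V$ can be built by Kirby calculus (blow-ups and handle slides to implement the framing change from $+1$ to $+N$), with spinness arranged by further $S^2 \times S^2$-stabilizations if necessary. Because the hypothesized diffeomorphism $W_1 \sharp (S^2 \times S^2) \simeq W_2 \sharp (S^2 \times S^2)$ is taken rel boundary, gluing $V$ to both sides preserves the diffeomorphism, and pushing forward by the cobordism map $F^-_V$, which intertwines the involutions up to homotopy, yields
\[
c_{B^4_{+N}(D_1)} + c_{B^4_{+N}(D_2)} \in \mathrm{Im}(1+\iota_{S^3_{+N}(K)}) \subset HF^-(S^3_{+N}(K),[0]).
\]

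Now write $B^4_{+N}(D_i) = Z \cup (S^3 \times I)_N(C_i)$, where $Z \colon S^3 \to L(N,1)$ is the standard cobordism obtained by $(+N)$-surgery on the unknot and $C_i$ is the concordance from the unknot $U$ to $K$ obtained by removing a small interior ball from $D_i$. The large surgery isomorphism $\Gamma_{N,0} \colon HF^-(S^3_{+N}(K),[0]) \xrightarrow{\simeq} A_0(K)$, which intertwines $\iota_{S^3_{+N}(K)}$ with $\iota_K$, combined with \Cref{lem:cfkmapsurgery} then yields
\[
\Gamma_{N,0}(c_{B^4_{+N}(D_i)}) = F_{C_i}\bigl(\Gamma_{N,0}(c_Z)\bigr) = F_{C_i}(1_U),
\]
where $1_U$ denotes the top generator of $A_0(U)$; hence $F_{C_1}(1_U)+F_{C_2}(1_U) \in \mathrm{Im}(1+\iota_K)$ in $A_0(K)$.

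Finally, I would truncate by $U=V=0$ to transport the inclusion down to $\widehat{HFK}(S^3,K)$. Since $D_i = D_U \cup C_i$, so that $F_{D_i} = F_{C_i} \circ F_{D_U}$ sends the generator of $\widehat{HFK}(S^3,\text{unknot})$ (in its empty-link normalization) to $t_{D_i}$, the class $F_{C_i}(1_U)$ truncates precisely to $t_{D_i}$, giving the desired $t_{D_1}+t_{D_2} \in \mathrm{Im}(1+\iota_K)$. The principal difficulty is the Kirby-calculus step of constructing $V$ with the required geometric properties (namely $B^4_{+1}(D_i)\cup V\simeq B^4_{+N}(D_i)$ rel boundary with a self-conjugate $\mathrm{spin}^c$ structure restricting to $[0]$); once this input is secured, the remainder is bookkeeping with the naturality, functoriality, large surgery isomorphism, and hat-flavored truncation already assembled in \Cref{sec:prelim}.
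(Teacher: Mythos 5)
Your first and last steps match the paper's strategy: \Cref{lem:4dimobs} gives the obstruction in $HF^-(S^3_{+1}(K))$, and the remaining task is to transport it into $A_0(K)$ and then into $\widehat{HFK}(S^3,K)$ compatibly with the involutions and with the disk cobordism maps. But your middle step --- the geometric cobordism $V\colon S^3_{+1}(K)\to S^3_{+N}(K)$ with a self-conjugate $\mathrm{spin}^c$ structure and $B^4_{+1}(D_i)\cup V\cong B^4_{+N}(D_i)$ rel boundary --- is not just the ``principal difficulty''; it fails. A framing-change cobordism from $+1$-surgery to $+N$-surgery is built from $2$-handles with odd framings (equivalently, blow-ups), so its intersection form is odd and it admits no self-conjugate $\mathrm{spin}^c$ structure restricting to $[0]$ on both ends; without self-conjugacy, $F^-_V$ does not intertwine $\iota_{S^3_{+1}(K)}$ with $\iota_{S^3_{+N}(K)}$, which is exactly what your push-forward of $\mathrm{Im}(1+\iota)$ requires. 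Your proposed repair --- further $S^2\times S^2$-stabilization --- does not make an odd form even, and worse, it kills the map: by the same computation as in \Cref{lem:stabilizedmap}, $F^I_{S^2\times S^2}\sim Q$, so the non-involutive map $F^-_{S^2\times S^2}$ (the $Q=0$ truncation) is nullhomotopic, and hence $F^-_{V\sharp(S^2\times S^2)}\sim 0$, making the transported membership statement vacuous.

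The paper avoids this entirely: it never passes geometrically from $+1$-surgery to large surgery. Instead it invokes the (involutive) mapping cone formula for $(+1)$-surgery --- the arguments of Section 4 of Dai--Hedden--Mallick--et al.\ applied to \Cref{lem:cfkmapsurgery} --- to produce an \emph{algebraically} defined, degree-preserving map $f\colon HF^-(S^3_{+1}(K))\to A_0(K)\subset CFK_{UV}(S^3,K)$ that commutes with the involutions and fits into a homotopy-commutative square with $F^-_{B^4_{+1}(D_i)}$ and $F_{D_i}$; \Cref{lem:cfkmapsurgery} (large-surgery naturality for concordance cobordisms) is an input to that argument, not a step to be reached by gluing a cobordism onto $B^4_{+1}(D_i)$. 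To repair your proof you would need to replace $V$ by this surgery-formula map, at which point you recover the paper's argument. The final truncation to $\widehat{HFK}$ identifying $F_{C_i}$ of the generator with $t_{D_i}$ is fine and agrees with the paper.
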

\begin{proof}
Suppose on the contrary that $t_{D_1} + t_{D_2} \not\in \mathrm{Im}(1+\iota_K)$. By applying the arguments of \cite[Section 4]{dai20222} to \Cref{lem:cfkmapsurgery}, it follows that there exists a degree-preserving map $f$ such that the following diagram commutes up to homotopy, for each $i=1,2$.
\[
\xymatrix{
HF^-(S^3 _{+1} (U)) \ar[rr]^(0.4){=}\ar[dd]_{F^-_{B^4 _{+1}(D_i)}} && A_0(U) \subset CFK_{UV}(S^3,U) \ar@<15pt>[dd]^{F_{D_i}} \\
\\
HF^-(S^3 _{+1} (K)) \ar[rr]^(0.4){f} && A_0(K) \subset CFK_{UV}(S^3,K)
}
\]
Furthermore, $f$ commutes with the involutions, i.e. $\iota_K \circ f \sim f \circ \iota_{S^3 _{+1}(K)}$. Since $t_{D_i}$ is defined to be the hat-flavored truncation of the image of the generator $1\in \mathbb{F}_2[U,V]\simeq CFK_{UV}(S^3,U)$ under the map $F_{D_i}$, it follows from our assumption that
\[
F^-_{B^4 _{+1}(D_1)}(1) + F^-_{B^4 _{+1}(D_2)}(1) \not\in \mathrm{Im}(1+\iota_{S^3 _{+1}(K)}).
\]
It now follows from \Cref{lem:4dimobs} that $B^4 _{+1}(D_1) \sharp (S^2 \times S^2)$ and $B^4_{+1}(D_2) \sharp (S^2 \times S^2)$ are not diffeomorphic rel boundary.
\end{proof}

Before we proceed to the next section, we will use the tools that we have developed to formulate an explicitly computable obstruction that can be used for our examples. 
\begin{defn} \label{defn:involutively_weird}
    A knot $K$ is \emph{involutively weird} if $\widehat{HFK}(S^3,K)$ admits an $\iota_K$-invariant splitting
    \[
    \widehat{HFK}(S^3,K) \simeq V_1 \oplus V_2 \oplus W,
    \]
    together with some splitting
    \[
    CFK^-(S^3 \setminus K) \simeq C_1 \oplus D, 
    \]
    such that the following conditions are satisfied.
    \begin{itemize}
        \item $V_1$ and $V_2$ are 1-dimensional;
        \item The sub-splitting $\widehat{HFK}(S^3,K) \simeq V_1 \oplus (V_2 \oplus W)$ is invariant under the (hat-flavored) basepoint actions $\Phi$ and $\Psi$, defined in \cite[Section 3]{zemke2017quasistabilization};
        \item $V_1$ and $V_2 \oplus W$ are the homology of the hat-flavored truncation of $C_1$ and $D$, respectively;
        \item There exists a direct summand of the form $(a\xrightarrow{U} b)$ in $D$ (i.e. a summand generated by $a$ and $b$ satisfying $\partial a = Ub$ and $\partial b = 0$) such that the homology class of $a$ in $\widehat{CFK}(S^3,K)$ generates $V_2$.
    \end{itemize}
\end{defn}
\begin{prop}\label{prop: involutive_weird_implies_result}
    Let $K_0$ be an involutively weird knot. Then $S^3 _{+1}(2K_0\sharp -2K_0)$ admits a pair of contractible 4-manifolds that stay exotic rel boundary after one stabilization.
\end{prop}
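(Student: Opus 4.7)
The plan is to apply \Cref{prop:obstructionprop} with sliced knot $J = 2K_0 \sharp -2K_0$ and slice disks $D_1 = D_{K,\mathrm{id}}$, $D_2 = D_{K,f}$ from \Cref{sec:ansatz}, where $K = 2K_0$ and $f = f_2 \circ f_1$. It then suffices to show
\[
t_{D_{K,\mathrm{id}}} + t_{D_{K,f}} \notin \mathrm{Im}(1 + \iota_J) \subset \widehat{HFK}(S^3, J).
\]

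First I would rewrite the left-hand side as $(1 + f_\ast)\, t_{D_{K,\mathrm{id}}}$ using functoriality of the link cobordism map, since $D_{K,f}$ is obtained from $D_{K,\mathrm{id}}$ by deform-spinning along $f$. Here $f_\ast$ is the action on $\widehat{HFK}(S^3, J)$ induced by the mapping cylinder of $f:(S^3,K)\to (S^3,K)$, extended by the identity across the $-K$ summand of $J$. Decomposing $f = f_2 \circ f_1$, one then has $(f_1)_\ast$ equal to the swap of the two $K_0$-factors under the K\"{u}nneth decomposition of $\widehat{HFK}(S^3, 2K_0)$, and $(f_2)_\ast$ equal to the Sarkar half-Dehn-twist action $\mathrm{id}+\Phi\Psi$ on the $K$-boundary.

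Next I would exploit the involutively weird structure of $K_0$. By \Cref{lem:identify_free_summand} applied to the standard ribbon disk for $J$, the class $t_{D_{K,\mathrm{id}}}$ lifts to the generator of the free $\mathcal{R}$-summand of $CFK_\mathcal{R}(S^3, J)$, and under the connected-sum K\"{u}nneth decomposition this generator is supported in the tensor product of the $V_1$-generators afforded by the involutively weird splittings on each $K_0$-factor. Since the swap $(f_1)_\ast$ permutes two copies of $K_0$ but interacts nontrivially with the pairing of the $K_0$-factors against the $-K_0$-factors built into the standard ribbon, a direct K\"{u}nneth computation shows that $(1 + (f_1)_\ast) t_{D_{K,\mathrm{id}}}$ has a nonzero component in a tensor summand involving the $V_2$-factor of $\widehat{HFK}(S^3, K_0)$. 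Using the distinguished cycle $a$ with $\partial a = Ub$ (whose hat class generates $V_2$), I would then produce an explicit minus-flavored lift of this $V_2$-component. The $\Phi,\Psi$-invariance of the splitting $V_1 \oplus (V_2 \oplus W)$ (second bullet of \Cref{defn:involutively_weird}) ensures that applying the subsequent $\mathrm{id}+\Phi\Psi$ from $(f_2)_\ast$ does not kill this $V_2$-component.

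Finally I would show that this component is not in $\mathrm{Im}(1 + \iota_J)$. The involution $\iota_J$ decomposes, via the connected-sum formula for the knot involution, in terms of $\iota_{K_0}$, its conjugate on $-K_0$, and correction terms built from $\Phi$ and $\Psi$. By the $\iota_{K_0}$-invariance and $\Phi,\Psi$-invariance of the splitting $V_1 \oplus (V_2 \oplus W)$, each piece of this decomposition preserves the $V_1$-versus-$(V_2 \oplus W)$ dichotomy on every $K_0$-factor. Since $t_{D_{K,\mathrm{id}}}$ is supported in $V_1$-only tensor factors, $(1 + \iota_J)$ applied to anything in those summands cannot produce the specific $V_2$-component constructed above. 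The hard part will be the bookkeeping in this last step: one must precisely track the $\Phi\Psi$-type cross-terms arising in the connected-sum formula for $\iota_J$ and verify that none of them reproduces the particular $V_2$-tensor contribution witnessed by the swap together with the lift $a$. The involutively weird hypothesis is engineered for exactly this purpose: its $\Phi,\Psi$-invariance rules out the dangerous cross-terms, while the cycle $a$ with $\partial a = Ub$ provides the concrete minus-flavored lift required to detect the obstruction at the $\iota_J$-level.
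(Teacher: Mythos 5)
Your overall strategy matches the paper's: reduce to \Cref{prop:obstructionprop}, identify $t_{D_{K,\mathrm{id}}}$ and $t_{D_{K,f}}$ with $\mathrm{id}$ and $f_\ast$ under $\widehat{HFK}(S^3,K\sharp-K)\simeq\mathrm{Hom}(\widehat{HFK}(S^3,K),\widehat{HFK}(S^3,K))$, decompose $f_\ast$ into the swap composed with the half-twist correction, and obstruct via the connected-sum formula for the involution. However, two steps in your execution are genuinely wrong. First, $t_{D_{K,\mathrm{id}}}$ is \emph{not} supported in the tensor product of the $V_1$-generators: under the Hom identification it is the identity map, i.e.\ the full diagonal $\sum_{x,y}x^\ast\otimes y^\ast\otimes x\otimes y$, which has components in every tensor summand. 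The paper does not need (and could not use) such a support statement; instead it isolates the coefficient of the single simple tensor $s=v_2^\ast\otimes v_1^\ast\otimes v_1\otimes v_2$ and shows it equals $1$ in $t_{D_{K,\mathrm{id}}}+t_{D_{K,f}}$, using only $\Psi(v_1)=0$.

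Second, your non-membership argument is logically insufficient: to show $t_{D_1}+t_{D_2}\notin\mathrm{Im}(1+\iota_J)$ you must show that $(1+\iota_J)(x)$ misses the class for \emph{every} $x\in\widehat{HFK}(S^3,J)$, not merely for $x$ lying in the "$V_1$-only'' summands. Moreover, the claimed "dichotomy preservation'' fails: the definition of involutive weirdness only makes $V_1\oplus(V_2\oplus W)$ invariant under $\Phi,\Psi$; it does \emph{not} make $V_2$ itself $\Phi$-invariant (indeed $\Phi(v_2)=[b]$ typically lands in $W$). The paper's argument instead expands $\iota_{2K_0\sharp-2K_0}$ as $(1+\mathcal{T})\circ(\iota_{K_0}^\ast)^{\otimes2}\otimes\iota_{K_0}^{\otimes2}$ with $\mathcal{T}$ an explicit twelve-term sum, and checks term by term that the coefficient of $s$ in $\mathcal{T}(x)$ vanishes for every simple tensor $x$. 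This uses exactly two facts: nothing maps to $v_1$ under $\Phi$ or $\Psi$ (which kills all but the term $\Psi^\ast\otimes1\otimes1\otimes\Phi$), and nothing maps to $v_2$ under $\Phi$ --- the latter being precisely what the direct-summand condition $(a\xrightarrow{U}b)$ with $[a]$ generating $V_2$ buys you, since then $a$ never appears in $\partial z$ with coefficient $U$. You invoke that cycle $a$ only to "produce a minus-flavored lift,'' which is not needed (the obstruction lives entirely in $\widehat{HFK}$) and misses its actual role.
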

\begin{proof}
    Recall that $K=K_0 \sharp K_0$. It follows from \cite[Theorem 5.1]{juhasz2020distinguishing} that, under the identification 
\[
\widehat{HFK}(S^3,K\sharp -K)\simeq \mathrm{Hom}(\widehat{HFK}(S^3,K),\widehat{HFK}(S^3,K)),
\]
the element $t_{D_{K,f}}$ corresponds to the induces action of $f$ on $HFK(S^3,K)$, whereas $t_{D_{K,\mathrm{id}}}$ corresponds to the identity map. It also follows from \cite[Theorem 8.1]{juhasz2018stabilization} that the induced action of $f$, under the identification
\[
\widehat{HFK}(S^3,K)\simeq \widehat{HFK}(S^3,K_0)\otimes \widehat{HFK}(S^3,K_0)
\]
is given by 
\[
f_\ast = \mathrm{Sw}\circ (\mathrm{id}+g).
\]
Here, $\mathrm{Sw}$ is the isomorphism defined by swapping the two copies of $\widehat{HFK}(S^3,K_0)$ with the second copy, and
\[
g = 1\otimes 1 + \Phi\otimes \Psi.
\]
In other words, the element $t_{D_{K,\mathrm{id}}}+t_{D_{K,f}}$ can be written as 
\[
\sum_{x} \sum_{x} x^\ast \otimes y^\ast \otimes (y\otimes x + \Psi(y)\otimes \Phi(x)),
\]
where $x,y$ runs over basis elements of $\widehat{CFK}(S^3,K_0)$. 

Since $K_0$ is involutively weird, it admits a splitting
    \[
    \widehat{HFK}(S^3,K) \simeq V_1 \oplus V_2 \oplus W
    \]
    where the conditions in \Cref{defn:involutively_weird} are satisfied. Choose bases of $V_1$, $V_2$, and $W$, (write the generators of $V_1$ and $V_2$ as $v_1$ and $v_2$, respectively) and write their union as $\mathcal{B}$, so that the elements $a,b$ in \Cref{defn:involutively_weird} are contained in $\mathcal{B}$; we write the set of canonical duals of elements of $\mathcal{B}$ by $\mathcal{B}^\ast$. We consider the simple tensor 
    \[
    s = v^\ast_2 \otimes v^\ast_1 \otimes v_1 \otimes v_2
    \]
    of elements in $\mathcal{B}$ and $\mathcal{B}^\ast$.
    
    We claim that, when we express $t_{D_{K,\mathrm{id}}}+t_{D_{K,f}}$ as a sum of simple tensors of elements in $\mathcal{B}$ and $\mathcal{B}^\ast$, the term $s$ has coefficient 1. Suppose the claim is false. Then the element
\[
\sum_{x} \sum_{x} x^\ast \otimes y^\ast \otimes \Psi(y)\otimes \Phi(x),
\]
when expressed as a sum of simple tensors of elements in $\mathcal{B}$ and $\mathcal{B}^\ast$, should have $s$ with coefficient 1. This means that, for some $x,y\in\mathcal{B}$, the element
\[
x^\ast \otimes y^\ast \otimes \Psi(y) \otimes \Phi(x)
\]
also has $s$ with coefficient 1. This can only happen if $x^\ast = v^\ast_2$ and $y^\ast = v^\ast_1$, i.e. $x=v_2$ and $y=v_1$. But then $\Psi(y)=\Psi(v_1)=0$, a contradiction. The claim is thus proven.

    We now claim that $t_{D_{K,\mathrm{id}}}+t_{D_{K,f}}$ is not contained in the image of $1+\iota_K$ in $\widehat{HFK}(S^3,K)$. Since $K=K_0\sharp K_0 \sharp -K_0 \sharp -K_0$, the action of $\iota_K$ can be computed by the involutive connected sum formula \cite[Theorem 1.1]{zemke2019connected}. In particular, by applying the formula 3 times, we get
    \[
    \begin{split}
        \iota_{2K_0 \sharp -2K_0} &\sim (1 + \Psi^\ast_{K_0} \otimes \Phi_{-K_0 \sharp 2K_0}) \circ (\iota^\ast_{K_0}\otimes \iota_{-K_0 \sharp 2K_0}) \\
        &\sim \cdots \\
        &\sim (1+\mathcal{T}) \circ (\iota^\ast_{K_0}\otimes \iota^\ast_{K_0} \otimes \iota_{K_0} \otimes \iota_{K_0}),
    \end{split}
    \]
    where $\mathcal{T}$ is defined as 
    \[
    \begin{split}
        \mathcal{T} &= \Psi^\ast \otimes \Phi^\ast \otimes 1 \otimes 1+\Psi^\ast \otimes 1 \otimes \Phi \otimes 1 + \Psi^\ast \otimes 1 \otimes 1 \otimes \Phi + 1 \otimes \Psi^\ast \otimes \Phi \otimes 1 \\
        &\quad + 1 \otimes \Psi^\ast \otimes 1 \otimes \Phi + 1 \otimes 1 \otimes \Psi \otimes \Phi + \Psi^\ast \otimes \Phi^\ast \otimes \Psi \otimes \Phi + 1 \otimes \Psi^\ast \otimes \Psi\Phi \otimes \Phi \\
        &\quad + \Psi^\ast \otimes 1 \otimes \Psi\Phi \otimes \Phi + \Psi^\ast \otimes \Psi^\ast\Phi^\ast \otimes \Phi \otimes 1+\Psi^\ast \otimes \Psi^\ast\Phi^\ast \otimes 1 \otimes \Phi + \Psi^\ast \otimes \Psi^\ast\Phi^\ast \otimes \Psi\Phi \otimes \Phi.
    \end{split}
    \]
    To prove the claim, we only have to prove the following statement: for any simple tensor $x=a^\ast\otimes b^\ast \otimes c\otimes d$ of elements $a,b,c,d\in \mathcal{B}$, when we express $s+\iota_K(s)$ as a sum of simple tensors of elements in $\mathcal{B}$ and $\mathcal{B}^\ast$, the coefficient of $s$ is zero. Suppose that this statement is false. Since we have 
    \[
    \iota_{K_0}(v_1)=v_1,\quad \iota_{K_0}(v_2)=v_2
    \]
    by the definition of involutive weirdness, we have to show that the coefficient of $s$ in $\mathcal{T}(x)$ is one. Among the terms in $\mathcal{T}$, the ones that may have nonzero coefficient of $s$ must have the form
    \[
    (\text{something})\otimes 1 \otimes 1 \otimes (\text{something}),
    \]
    since by the definition of involutive weirdness, $\Phi(v_1)=\Psi(v_1)=0$ and there is no element in $\mathcal{B}$ whose image under either $\Phi$ or $\Psi$, when expressed as a sum of elements in $\mathcal{B}$, can contain $v_1$. The only terms satisfying this condition is $\Psi^\ast \otimes 1 \otimes 1 \otimes \Phi$, which means that the coefficient of $s(=v^\ast_2 \otimes v^\ast_1 \otimes v_1 \otimes v_2)$ in
    \[
    \Psi^\ast(a^\ast)\otimes b^\ast \otimes c \otimes \Phi(d)
    \]
    should be 1. However, since the action of $\Phi$ on the hat-flavored knot Floer homology simply acts by counting terms in $CFK_\mathcal{R}$ with coefficient exactly $U$, it follows from the definition of involutively weird knots that there is no element $z$ in $\widehat{HFK}(S^3,K_0)$ such that the coefficient of $v_2$ in $\Phi(z)$, when expressed as a sum of elements in $\mathcal{B}$, so this situation cannot happen. Hence the claim is proven.

    Therefore, using \Cref{prop:obstructionprop}, it follows from the claim that the contractible 4-manifolds $W_1$ and $W_2$, formed by taking $(+1)$-surgery along the deform-spun disks $D_{K,\mathrm{id}}$ and $D_{K,f}$, are not diffeomorphic rel boundary after connected-summing with $S^2 \times S^2$. The proposition follows.
\end{proof}

For a future usage, especially in \cite{guth2024invariant}, we will also develop a simpler criterion that implies the ``one is not enough'' result. However, this criterion will not be used in this paper; we will stick to \Cref{defn:involutively_weird} and \Cref{prop: involutive_weird_implies_result}.

\begin{defn}
    A knot $K$ is \emph{involutively weird} if $\widehat{HFK}(S^3,K)$ admits a splitting
    \[
    \widehat{HFK}(S^3,K) \simeq V_1 \oplus V_2 \oplus W
    \]
    which is invariant under the actions of $\iota_K$, $\Phi$, and $\Psi$, and satisfies $\dim(V_1)=\dim(V_2)=1$.
\end{defn}
\begin{prop}
    Let $K$ be an involutively weird knot. Then $S^3 _{+1}(4K_0\sharp -4K_0)$ admits a pair of contractible 4-manifolds that stay exotic rel boundary after one stabilization.
\end{prop}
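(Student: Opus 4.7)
The plan is to imitate the proof of \Cref{prop: involutive_weird_implies_result}, but applied to the fourfold connected sum in place of the twofold sum, compensating for the weaker hypothesis by working in a larger tensor product. Set $K' = K\sharp K\sharp K\sharp K$, so that $K'\sharp -K' = 4K\sharp -4K$, and let $f$ be the composition of the $180^{\circ}$ rotation that exchanges the four $K$-summands in pairs with a half Dehn twist along $K'$. The $(+1)$-surgeries along the deform-spun disks $D_{K',\mathrm{id}}$ and $D_{K',f}$ produce a pair of contractible 4-manifolds bounding $S^3_{+1}(4K\sharp -4K)$, and by \Cref{prop:obstructionprop} it is enough to show that
\[
t_{D_{K',\mathrm{id}}} + t_{D_{K',f}} \;\notin\; \mathrm{Im}\bigl(1+\iota_{K'}\bigr) \;\subseteq\; \widehat{HFK}(S^3, K'\sharp -K').
\]

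For the left-hand side, I would iterate \cite[Theorem 8.1]{juhasz2018stabilization} together with \cite[Theorem 5.1]{juhasz2020distinguishing} to express $t_{D_{K',f}}$ under the identification
\[
\widehat{HFK}(S^3, K'\sharp -K') \;\simeq\; \widehat{HFK}(S^3, K)^{\vee\, \otimes 4}\;\otimes\;\widehat{HFK}(S^3, K)^{\otimes 4}
\]
as the appropriate swap composed with a product of factors $1+\Phi\otimes\Psi$, one for each pair of $K$-summands that $f$ exchanges. Fix a homogeneous basis $\mathcal{B}$ of $\widehat{HFK}(S^3,K)$ containing the generators $v_1, v_2$ of $V_1, V_2$, and choose a simple tensor
\[
s \;=\; v_{i_1}^\ast \otimes v_{i_2}^\ast \otimes v_{i_3}^\ast \otimes v_{i_4}^\ast \otimes v_{j_1} \otimes v_{j_2} \otimes v_{j_3} \otimes v_{j_4}
\]
with $i_k,j_k\in\{1,2\}$ matched to the pairing structure of $f_\ast$. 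As in \Cref{prop: involutive_weird_implies_result}, a term-by-term expansion shows $s$ occurs in $t_{D_{K',\mathrm{id}}}+t_{D_{K',f}}$ with coefficient $1$, using only that $\Phi(v_1) = \Psi(v_1) = 0$ and the $(\Phi,\Psi)$-invariance of the splitting.

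For the right-hand side, I would apply the involutive connected sum formula \cite[Theorem 1.1]{zemke2019connected} seven times to write
\[
\iota_{K'} \;\sim\; (1+\mathcal{T}) \circ \bigl((\iota_K^\ast)^{\otimes 4} \otimes \iota_K^{\otimes 4}\bigr),
\]
where $\mathcal{T}$ is an explicit (very large) sum of tensor products of operators drawn from $\{1,\Phi,\Psi,\Psi\Phi\}$ across the eight slots. The key input from the new definition is that $V_1\oplus V_2 \oplus W$ is invariant under $\iota_K,\Phi,\Psi$ with $\dim V_1 = \dim V_2 = 1$: by grading considerations this forces $\Phi(v_i) = \Psi(v_i) = 0$, and moreover the expansion of $\Phi(z)$ or $\Psi(z)$ in $\mathcal{B}$ never contains $v_1$ or $v_2$ for any $z\in\mathcal{B}$. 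Consequently a term of $\mathcal{T}$ can contribute to $s$ only if it acts as the identity in each of the eight slots carrying $v_i$ or $v_i^\ast$; a case analysis over the surviving terms then rules out any nonzero contribution, and together with $\iota_K$-invariance of $v_1, v_2$ this yields the desired non-containment.

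The main obstacle is managing the combinatorial explosion of $\mathcal{T}$ for eight summands: iterating Zemke's formula seven times produces exponentially many terms. The point of using four copies of $K$ rather than two is precisely that the previous proof relied on the chain-level rectangular summand $(a\xrightarrow{U} b)$ in \Cref{defn:involutively_weird} to eliminate a specific problematic term of the form $\Psi^\ast \otimes 1 \otimes 1 \otimes \Phi$; this chain-level input is unavailable under the simpler definition, so the price paid is a larger tensor product, where the support and grading constraints given by $(\Phi,\Psi)$-invariance of $V_1, V_2, W$ alone are sufficient to filter out every offending term.
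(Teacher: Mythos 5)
Your proposal is correct and follows essentially the same route as the paper: reduce to the non-containment $t_{D_{K',\mathrm{id}}}+t_{D_{K',f}}\notin\mathrm{Im}(1+\iota_{K'})$ via \Cref{prop:obstructionprop}, pick a simple tensor $s$ built from $v_1,v_2$ across the eight slots, expand $\iota_{4K\sharp -4K}\sim(1+\mathcal{T})\circ(\iota_K^{\ast\otimes 4}\otimes\iota_K^{\otimes 4})$ by iterating Zemke's connected sum formula, and use that every term of $\mathcal{T}$ contains a $\Phi$- or $\Psi$-type factor together with $\Phi(v_i)=\Psi(v_i)=0$ and the $(\Phi,\Psi)$-invariance of the full splitting to kill every contribution to $s$. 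You also correctly identify the paper's reason for passing from two to four copies, namely that the stronger $\Psi$-invariance replaces the chain-level rectangular-summand input used to handle the $\Psi^\ast\otimes 1\otimes 1\otimes\Phi$ term in the two-copy argument.
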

\begin{proof}
    Following the proof of \Cref{prop: involutive_weird_implies_result} gives this proposition as well; we use the monomial
    \[
    s = v^\ast_1 \otimes v^\ast_2 \otimes v^\ast_2 \otimes v^\ast_1 \otimes v_2 \otimes v_1 \otimes v_1 \otimes v_2
    \]
    and take the 180 degree rotation action on the connected sum of four copies of $K_0$. Since we assumed that the splitting $V_1 \oplus V_2 \oplus W$ is also invariant under $\Psi$, the proof in this case is much easier. The most crucial part of the proof would be to prove that for any simple tensor of the form
    \[
    x = a^\ast \otimes b^\ast \otimes c^\ast \otimes d^\ast \otimes e \otimes f \otimes g \otimes h,
    \]
    the coefficient of $s$ in $\mathcal{T}(x)$ is always zero, where $\mathcal{T}$ satisfies
    \[
    \iota_{4K_0 \sharp -4K_0} \sim (1+\mathcal{T})\circ (\iota^\ast_{K_0} \otimes \iota^\ast_{K_0} \otimes \iota^\ast_{K_0} \otimes \iota^\ast_{K_0} \otimes \iota_{K_0} \otimes \iota_{K_0} \otimes \iota_{K_0} \otimes \iota_{K_0}).
    \]
    As in the proof of \Cref{prop: involutive_weird_implies_result}, $\mathcal{T}$ is a sum of simple tensors involving $1$, $\Phi$, $\Psi$, $\Phi^\ast$, and $\Psi^\ast$, where each term contains at least one of either $\Phi$ or $\Phi^\ast$ and one of either $\Psi$ or $\Psi^\ast$. It then follows from the definition of involutive nontriviality that the coefficient of $s$ in $\mathcal{T}(x)$ is always zero.
\end{proof}

\section{Proof of the main theorem} \label{sec:proof}
To prove \Cref{thm:main}, we have to compute the action of $\iota_K$ on $CFK_{UV}(S^3,K_0)$; recall that the knot $K_0$ is defined as
\[
K_0 = (-T_{4,9} \sharp 2T_{4,5})_{3,-1}.
\]
Of course, we will not be able to compute the total action of $\iota_K$. However we are able to determine a small part of the action, and it will turn out to be sufficient for our purposes. We assume that the reader is familiar with standard notations and techniques in bordered Heegaard Floer theory, as in \cite{lipshitz2018bordered}.

For simplicity, write the knot $-T_{4,9} \sharp 2T_{4,5}$ as $K_1$, so that $K_0 = (K_1)_{3,-1}$. It is shown in \cite[Remark 4.8]{hendricks2021quotient} that the $CFK_{UV}(S^3,K_1)$ is $\iota_K$-locally equivalent to the following complex $C$, where $x$ and $d$ have the same bidegree, $(0,0)$.
\[
\xymatrix{
 && b \ar[dd]_{V^2} && a \ar[ll]_{U^2} \ar[dd]^{V^2} \\
x & \oplus & &&\\
 && d && c \ar[ll]^{U^2}
}
\]
Here, the $\iota_K$-action is given as follows.
\[
a\mapsto a,\,x\mapsto x+d,\,b\mapsto c,\,c\mapsto b,\,d\mapsto d.
\]
By \cite[Remark 4.8]{hendricks2020surgery}, we have a decomposition
\[
CFK_{UV}(S^3,K_1) \simeq C \oplus D
\]
of $\iota_K$-complexes, where $D$ is a direct sum of rectangular complexes. This also implies that $K_1$ is $\mathcal{R}$-multirectangular. Since $H_\ast(\hat{C})$ is clearly 5-dimensional, we can apply \Cref{prop:keyprop} to show that there exists an $\iota_{S^3 \setminus K_1}$-invariant splitting 
\[
\widehat{CFD}(S^3 \setminus K_1)\simeq M\oplus N
\]
such that $M\simeq \mathcal{M}_C$ and $N\simeq \mathcal{M}_D$. This discussion will be used in the proof of \Cref{lem:the_knot_is_weird}.

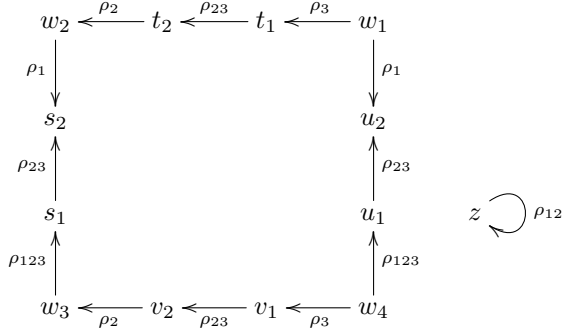
\begin{figure}
\[
\xymatrix{
w_2 \ar[d]_{\rho_1} & t_2 \ar[l]_{\rho_2} & t_1 \ar[l]_{\rho_{23}} & w_1 \ar[l]_{\rho_3} \ar[d]^{\rho_1} \\
s_2 & & & u_2 \\
s_1 \ar[u]^{\rho_{23}} & & & u_1 \ar[u]_{\rho_{23}} & z \ar@(ur,dr)^{\rho_{12}} \\
w_3 \ar[u]^{\rho_{123}} & v_2 \ar[l]^{\rho_2} & v_1 \ar[l]^{\rho_{23}} & w_4 \ar[l]^{\rho_3} \ar[u]_{\rho_{123}}
}
\]
\caption{The type-D structure $M$, a direct summand of $\widehat{CFD}(S^3 \setminus K_1)$.}
\label{fig:componentM}
\end{figure}

Furthermore, if we denote the $(3,-1)$-cabling pattern inside a solid torus as $P_{3,-1}$, then the type-A structure $CFA^-(T_\infty,P_{3,-1})$ can be described as in \Cref{fig:cablingmodule}. Note that we denote its truncation by $U=0$ as $\widehat{CFA}(T_\infty,P_{3,-1})$.

\begin{figure}
\[
\xymatrix{
& a_3 \ar[ld]_{\rho_2}\ar[dd]^{U^3} && a_2\ar[ll]_{\rho_2,\rho_1}\ar[dd]^{U^2} && a_1\ar[ll]_{\rho_2,\rho_1} \ar[dd]^U\\
c \ar[rd]_{\rho_3} \\
& b_3 && b_2\ar[ll]^{U\cdot (\rho_2,\rho_1)} && b_1\ar[ll]^{U\cdot (\rho_2,\rho_1)}
}
\]
\caption{The type-A structure $CFA^-(T_\infty,P_{3,-1})$. Its truncation by $U=0$, which corresponds to removing all arrows whose label contains $U$, gives $\widehat{CFA}(T_{\infty},P_{3,-1})$.}
\label{fig:cablingmodule}
\end{figure}
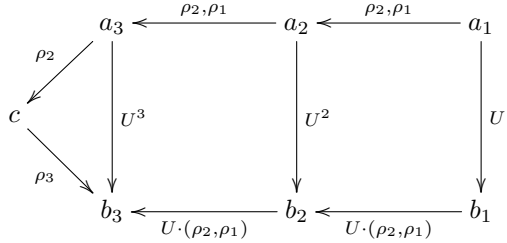

We can now carry out a partial computation of the $\iota_K$-action on the knot Floer chain complex of $K_0 = (K_1)_{3,-1}$. Recall that we chose a decomposition of $\widehat{CFD}(S^3 \setminus K_1)$ as
\[
\widehat{CFD}(S^3 \setminus K_1) \simeq M \oplus N.
\]
Tensoring this with $\widehat{CFA}(T_\infty,P_{3,-1})$ gives
\[
\widehat{CFK}(S^3,K_0) \simeq \widehat{CFA}(T_\infty,P_{3,-1}) \boxtimes \widehat{CFD}(S^3 \setminus K_1) \simeq M_{3,-1} \oplus N_{3,-1}
\]
via pairing formula, where we denote the tensor products of $M$ and $N$ with $\widehat{CFA}(T_\infty,P_{3,-1})$ by $M_{3,-1}$ and $N_{3,-1}$, respectively.

Observe that we can compute the $\mathcal{R}$-coefficient knot Floer chain complex, $CFK_{\mathcal{R}}(S^3,K_0)$, of $K_0$, from $\widehat{CFD}(S^3 \setminus K_1)$ via immersed curve cabling formula of Hanselman and Watson \cite{hanselman2019cabling}, where $\mathcal{R}$ denotes the ring $\mathbb{F}_2[U,V]/(UV)$. The component we get from the direct summand $M$, which should correspond to $M_{3,-1}$ via cabling formula, is given as the direct sum
\[
C_1 \oplus C_2 \oplus C_3 \oplus (\mathcal{R}\cdot \omega),
\]
where $\omega$ is $(0,0)$-bigraded and generates the free summand $\mathcal{R} \cdot \omega$, and the rest are given as described in \Cref{fig:Mcomplexes}. The bidegrees of their generators are presented in \Cref{fig:bidegrees}.

It is natural to ask how the pairing formula identifies the homology class of $\widehat{HFK}$ as shown above with elements of $\widehat{CFA}(T_\infty,P_{3,-1}) \boxtimes M$. To see this, we note that $c\otimes z$ has bidegree $(0,0)$ and $a_1 \otimes v_2$ has bidegree $(1,1)$. It follows from the immersed curve computation, which we omit for simplicity, that there exists a unique hat-flavored homology classes of bidegrees $(0,0)$ (which is clearly $\omega$) and $(1,1)$ (denoted as $\zeta$ in \Cref{fig:Mcomplexes}. Hence we see that $c\otimes z$ and $a_1 \otimes v_2$ are identified with $\omega$ and $\zeta$. We will also note that $b_1 \otimes v_2$ is identified with $\alpha$ in the figure. We do not have to know about how other homology classes are identified.

\begin{figure}[h]
\[
\xymatrix@C=0.7em@R=0.7em{
  & \bullet\ar@{.>}[d] & & \bullet \ar[ll]\ar@{.>}[d] & & & & & \bullet\ar@{.>}[dddddd] & \bullet\ar[l]\ar@{.>}[dd] \\
 &\bullet & \bullet\ar[l]\ar@{.>}[dd] & \bullet & & \bullet \ar[ll]\ar@{.>}[dd] & & & & & & \bullet\ar@{.>}[d] & & & & & &\bullet \ar[llllll]\ar@{.>}[d] & \\
  & & & & & & & & & \bullet & \bullet \ar[l]\ar@{.>}[dddddd] & \bullet & & \bullet\ar[ll]\ar@{.>}[d] & & & & \bullet & & \bullet\ar[ll]\ar@{.>}[d] & \\
 & & \alpha & \zeta\ar[l]\ar@{.>}[d] & & \bullet & \bullet \ar[l]\ar@{.>}[dd] & & & & & & & \bullet & & & & & & \bullet\ar[llllll] & & \omega\\
 & & & \bullet & &\bullet\ar[ll]\ar@{.>}[d] \\
 & & & & & \bullet & \bullet \ar[l] & & &  \\
 & & & & & & & & \bullet & \bullet\ar[l]\ar@{.>}[dd] \\
& & &   \\
& & & & & & & & & \bullet & \bullet\ar[l]
}
\]
\caption{The complexes $C_1$, $C_2$, $C_3$, and $\mathcal{R}\cdot \omega$, from left to right. Horizontal arrows of length $\ell$ denotes a term in the differential with coefficient $U^\ell$. Vertical arrows should correspond to $V^\ell$ terms in the differential, but this is not known, and we don't have to consider them in our arguments.}
\label{fig:Mcomplexes}
\end{figure}
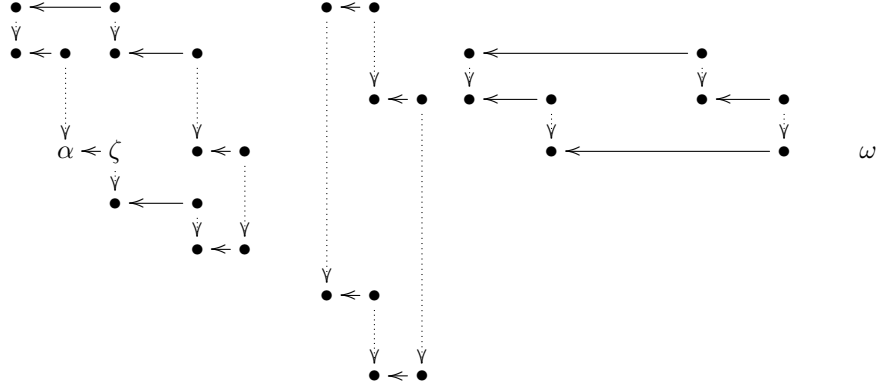

\begin{figure}[h]
\[
\xymatrix@C=0.7em@R=0.7em{
  & (5,-5)\ar@{.>}[d] & & (2,-4) \ar[ll]\ar@{.>}[d] & & & & & (1,-7)\ar@{.>}[dddddd] & (0,-6)\ar[l]\ar@{.>}[dd] \\
 &(4,-4) & (3,-3)\ar[l]\ar@{.>}[dd] & (1,-3) & & (-2,-2) \ar[ll]\ar@{.>}[dd] \\
  & & & & & & & & & (-1,-3)& (-2,-2) \ar[l]\ar@{.>}[dddddd] \\
 & & (2,0) & (1,1)\ar[l]\ar@{.>}[d] & & (-3,1) & (-4,2) \ar[l]\ar@{.>}[dd] \\
 & & & (0,2) & &(-3,3)\ar[ll]\ar@{.>}[d] \\
 & & & & & (-4,4) & (-5,5) \ar[l] & & &  \\
 & & & & & & & & (0,4) & (-1,5) \ar[l]\ar@{.>}[dd] \\
& & &   \\
& & & & & & & & & (-2,8) & (-3,9)\ar[l] \\
(9,-3)\ar@{.>}[d] & & & & & & (-2,-2)\ar@{.>}[d]\ar[llllll] \\
(8,-2) & & (5,-1)\ar[ll]\ar@{.>}[d] & & & & (-3,-1) & & (-6,0)\ar[ll]\ar@{.>}[d] & & (0,0) \\
& & (4,0) & & & & & & (-7,1) \ar[llllll]
}
\]
\caption{The $(n_z,n_w)$ bidegrees for the complexes $C_1,C_2,C_3$, and $\omega$.}
\label{fig:bidegrees}
\end{figure}
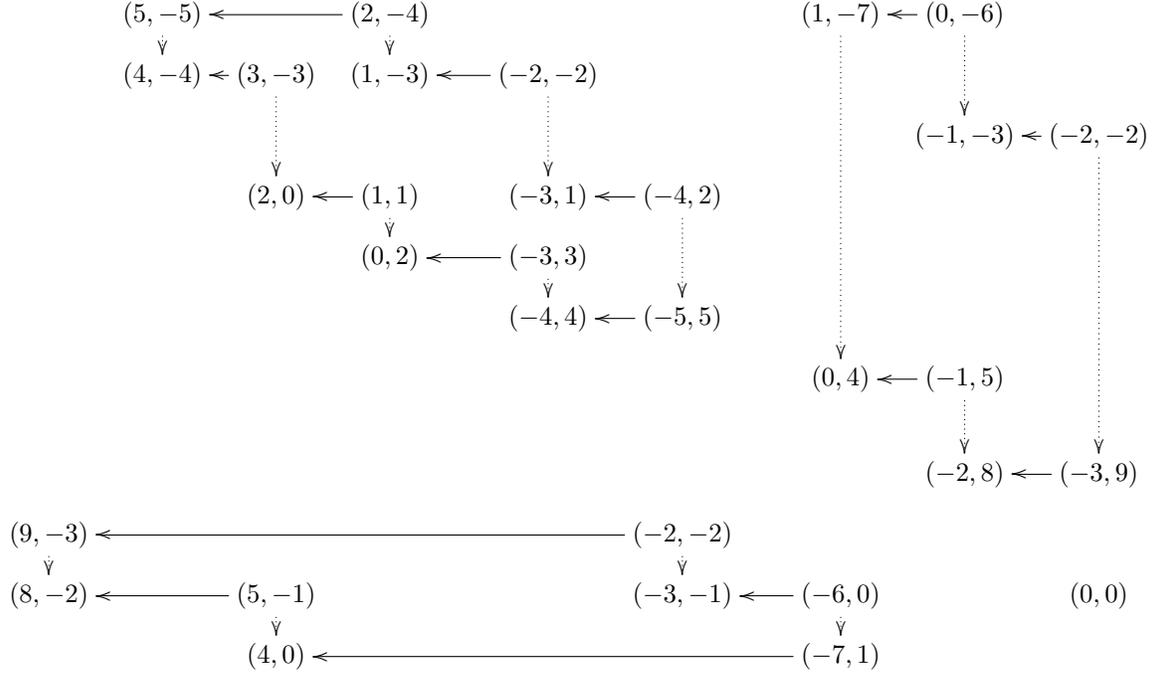

From this data, we can prove that $K_0$ is involutively weird. We will not prove that it is involutively nontrivial, as we do not need that stronger condition to prove our main result. However, one can use the results of \cite{guth2024invariant} to show that it is in fact involutively nontrivial.
\begin{lem} \label{lem:the_knot_is_weird}
    The knot $K_0 = (-T_{4,9}\sharp 2T_{4,5})_{3,-1}$ is involutively weird.
\end{lem}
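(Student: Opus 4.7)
The plan is to exhibit the data required by the definition of involutive weirdness using the decomposition already assembled above. The $\iota$-invariant summand $C$ of $CFK_{UV}(S^3,K_1)$, combined with \Cref{prop:keyprop}, yields an $\iota_{S^3\setminus K_1}$-invariant splitting $\widehat{CFD}(S^3\setminus K_1)\simeq M\oplus N$; pairing with $\widehat{CFA}(T_\infty,P_{3,-1})$ gives an $\iota_{K_0}$-invariant splitting $\widehat{CFK}(S^3,K_0)\simeq M_{3,-1}\oplus N_{3,-1}$, while the Hanselman--Watson cabling formula identifies the $M$-contribution to $CFK_\mathcal{R}(S^3,K_0)$ with $C_1\oplus C_2\oplus C_3\oplus \mathcal{R}\cdot\omega$ displayed in \Cref{fig:Mcomplexes}. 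All of the ingredients are on the table.

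I would then set $V_1=\mathbb{F}_2\cdot[\omega]$ and $V_2=\mathbb{F}_2\cdot[\zeta]$, with $\omega$ and $\zeta$ the labelled generators at bidegrees $(0,0)$ and $(1,1)$. For the minus-flavored splitting, the arrow $\zeta\to\alpha$ inside $C_1$ has horizontal length one and so carries the coefficient $U$, while $\alpha$ at $(2,0)$ is a sink with no other incident horizontal arrow, and $\zeta$ has no incoming horizontal arrow. Truncating by $V=0$ kills the dashed vertical arrows, so the pair $(\zeta\xrightarrow{U}\alpha)$ splits off as a direct summand of $C_1/V$, and hence of $CFK^-(S^3,K_0)$. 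Setting $C_1'=\mathbb{F}_2[U]\cdot\omega$ and letting $D$ be the complementary summand then gives the required decomposition of $CFK^-(S^3,K_0)$, with $(\zeta\xrightarrow{U}\alpha)\subset D$ providing the required generator of $V_2$.

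The $\Phi$- and $\Psi$-invariance of the sub-splitting $V_1\oplus(V_2\oplus W)$ is immediate: because $\omega$ sits in a free $\mathcal{R}$-summand on which the differential vanishes, $\Phi[\omega]=\Psi[\omega]=0$, and because no element outside $\mathcal{R}\cdot\omega$ has $\omega$, $U\omega$, or $V\omega$ in its boundary, no class other than $[\omega]$ can map into $V_1$ under $\Phi$ or $\Psi$. The main obstacle is establishing the $\iota_{K_0}$-invariance of the three-term splitting. For $V_1$ this follows from the uniqueness up to homotopy of the free $\mathcal{R}$-summand in the Popovic decomposition \cite[Corollary 4.2]{popovic2023link}, which forces $\iota_{K_0}$ to preserve $\mathcal{R}\cdot\omega$ and hence to fix $[\omega]$. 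For $V_2$ I would read off from \Cref{fig:bidegrees} that no other generator of the $M$-part lies in bidegree $(1,1)$; since the splitting $M_{3,-1}\oplus N_{3,-1}$ is $\iota_{K_0}$-invariant and $\iota_{K_0}$ preserves bidegree (it swaps $U$ and $V$, so the diagonal bidegree $(1,1)$ is fixed), $\iota_{K_0}[\zeta]$ is forced into $\mathbb{F}_2\cdot[\zeta]$, and the relation $\iota_{K_0}^2\sim\id$ rules out the zero option. A compatible $\iota_{K_0}$-invariant complement $W$ then exists by the elementary structure theory of modules over $\mathbb{F}_2[\iota]/(\iota^2-1)$.
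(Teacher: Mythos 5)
Your argument is essentially the paper's own: the same choices $V_1=\mathbb{F}_2\cdot[\omega]$ and $V_2=\mathbb{F}_2\cdot[\zeta]$, the same $(\zeta\xrightarrow{U}\alpha)$ summand read off from $C_1\vert_{V=0}$, the same $\Phi,\Psi$ argument via the free $\mathcal{R}\cdot\omega$ summand, and the same bidegree-uniqueness argument (combined with the $\iota$-invariance of $M_{3,-1}\oplus N_{3,-1}$ coming from the bordered gluing theorem) for the $\iota_{K_0}$-invariance. Two small repairs: a fixed vector of an $\mathbb{F}_2[\iota]/(\iota^2-1)$-module need not admit an invariant complement in general, so take $W$ to be the sum of the remaining bidegree-homogeneous pieces, which $\iota_{K_0}$ permutes since it sends bidegree $(a,b)$ to $(b,a)$; and $\iota_{K_0}^2$ is homotopic to the Sarkar map $\xi_{K_0}$ rather than the identity, so to rule out $\iota_{K_0}[\zeta]=0$ use instead that $\iota_{K_0}$ is injective on homology.
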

\begin{proof}
    We start by observing that $M$ has a direct summand homotopy equivalent to $\widehat{CFD}(S^3 \setminus U)$, generated by $z$ in \Cref{fig:componentM}; we denote it $M_0$, so that we have a splitting
    \[
    M \simeq M_0 \oplus M_1.
    \]
    Then we consider the splitting
    \[
    CFK^-(S^3,K_0) \simeq (CFA^-(T_\infty,\nu)\boxtimes M_0) \oplus (CFA^-(T_\infty,\nu)\boxtimes (M_1 \oplus N)).
    \]
    The summand $CFA^-(T_\infty,\nu)\boxtimes (M_1 \oplus N)$ then admits direct summand generated by $\alpha$ and $\zeta$, i.e.
    \[
    CFA^-(T_\infty,\nu)\boxtimes (M_1 \oplus N) \simeq (\zeta\xrightarrow{U}\alpha)\oplus (\text{the rest}).
    \]
    The element $\zeta$ then becomes a cycle after hat-flavored truncation. 
    
    We then consider the summand $M_0$. It is clear that $\widehat{CFA}(T_\infty,\nu)\boxtimes M_0$ has 1-dimensional homology, generated by $\omega$ in \Cref{fig:componentM}. By \Cref{lem:identify_free_summand}, we see that $\omega$ is the hat-flavored truncation of a cycle in $CFK_\mathcal{R}(S^3,K_0)$ which generates its direct summand isomorphic to $\mathcal{R}$. Since the basepoint actions can be computed as the partial derivatives of the knot Floer differential with respect to the variables $U$ and $C$ \cite[Section 3]{zemke2017quasistabilization}, and we are only considering their actions on the hat flavored knot Floer homology, it follows that the splitting
    \[
    \widehat{CFK}(S^3,K_0) \simeq (\widehat{CFA}(T_\infty,\nu)\boxtimes M_0) \oplus (\widehat{CFA}(T_\infty,\nu)\boxtimes (M_1 \oplus N))
    \]
    is invariant under $\hat\Phi$ and $\hat\Psi$. 

    Hence, in order to show that $K_0$ is involutively weird, we only have to show that the splitting
    \[
    \widehat{HFK}(S^3,K_0)\simeq \mathbb{F}_2\cdot \omega \oplus \mathbb{F}_2 \cdot \zeta \oplus (\text{the rest})
    \]
    is $\iota_K$-invariant. To prove this, we first use the following fact in a previous discussion: there exists an $\iota_{S^3 \setminus K_1}$-invariant splitting
    \[
    \widehat{CFD}(S^3 \setminus K_1) \simeq M \oplus N.
    \]
    For simplicity, we will denote the restrictions of $\iota_{S^3 \setminus K_1}$ to $M$, which is a homotopy equivalence from $\widehat{CFDA}(\mathbf{AZ})\boxtimes M$ to $M$, as $\iota_M$. By \cite[Theorem 1.3]{kang2022involutive}, we see that the following splitting is $\iota_K$-invariant:
    \[
    \widehat{CFK}(S^3,K_0) \simeq (\widehat{CFA}(T_\infty,\nu)\boxtimes M) \oplus (\widehat{CFA}(T_\infty,\nu)\boxtimes N),
    \]
    thus we only have to show that the summand $\widehat{CFA}(T_\infty,\nu)\boxtimes M$ $\iota_K$-invariantly splits off $\omega$ and $\zeta$. 

    To prove this, we simply consider the ($(n_z,n_w)$-)bidegree of generators of $\widehat{CFA}(T_\infty,\nu)\boxtimes M$, as shown in \Cref{fig:bidegrees}. We see that $\zeta$ is the only generator with bidegree $(1,1)$, and $\omega$ is the only generator with bidegree $(0,0)$. Since $\iota_K$ maps elements of bidegree $(a,b)$ to elements of bidegree $(b,a)$, it is clear that $\omega$ and $\zeta$ splits of $\iota_K$-invariantly from $\widehat{CFA}(T_\infty,\nu)\boxtimes M$. Therefore $K_0$ is involutively weird.
\end{proof}

Now we are able to prove \Cref{thm:main}.
\begin{proof}[Proof of \Cref{thm:main}]
This follows from \Cref{lem:the_knot_is_weird} and \Cref{prop: involutive_weird_implies_result}.
\end{proof}

Finally, we will prove \Cref{cor:main} using \Cref{thm:main}.

\begin{proof}[Proof of \Cref{cor:main}]
By \Cref{thm:main}, we know that there exists a cork $(W,Y,f)$ such that $f$ does not extend smoothly to $W\sharp (S^2 \times S^2)$. By following the arguments used in the proof of \cite[Theorem A]{akbulut2016absolutely}, one can construct a homology cobordism $X$ from $Y$ to another homology sphere $N$, admitting a left inverse $\bar{X}$, i.e. $\bar{X}\cup_N X \simeq Y\times I$, such that any self-diffeomorphism of $N$ extends to a self-diffeomorphism of $X$ which acts by identity on $Y$. Then we consider the 4-manifolds
\[
V = W \cup X,\quad V^\prime = W \cup_{f} X,
\]
which are simply-connected homology balls by \cite[Proposition 2.6]{akbulut2016absolutely}, so that they are contractible, hence homeomorphic. Then one can simply follow the remaining part of the proof of \cite[Theorem A]{akbulut2016absolutely} to conclude that there exists no diffeomorphism between $V\sharp (S^2 \times S^2)$ and $V^\prime \sharp (S^2 \times S^2)$.
\end{proof}

\bibliographystyle{amsalpha}
\bibliography{ref}
\end{document}